\title{Completion of tree metrics and rank 2 matrices}
\author{Daniel Irving Bernstein}
\address{Department of Mathematics \\ North Carolina State University, Raleigh, NC 27695}
\email{dibernst@ncsu.edu}
\theoremstyle{plain}
\newtheorem{thm}{Theorem}[section]
\newtheorem{lemma}[thm]{Lemma}
\newtheorem{prop}[thm]{Proposition}
\newtheorem*{thm*}{Theorem}
\newtheorem*{lemma*}{Lemma}
\newtheorem*{prop*}{Proposition}
\newtheorem*{cor*}{Corollary}
\newtheorem*{conj*}{Conjecture}
\theoremstyle{definition}
\newtheorem{defn}[thm]{Definition}
\newtheorem*{defn*}{Definition}
\theoremstyle{remark}
\newtheorem{rmk}[thm]{Remark}
\newcommand{\qq}{\mathbb{Q}}
\newcommand{\rr}{\mathbb{R}}
\newcommand{\cc}{\mathbb{C}}
\newcommand{\kk}{\mathbb{K}}
\newcommand{\calm}{\mathcal{M}}
\newcommand{\cals}{\mathcal{S}}
\newcommand{\calt}{\mathcal{T}}
\newcommand{\ind}{\mbox{$\perp \kern-5.5pt \perp$}}
\newcommand{\puis}{\cc\{\!\{t\}\!\}}
\newcommand{\val}{\textnormal{val}}
\newcommand{\trop}{\textnormal{trop}}
\newcommand{\rowspan}{\textnormal{rowspan}}
\newcommand{\cat}{\textnormal{Cat}}
\renewcommand{\span}{\textnormal{span}}
\newcommand{\gr}{\textnormal{Gr}}
\let\originalleft\left
\let\originalright\right
\def\left#1{\mathopen{}\originalleft#1}
\def\right#1{\originalright#1\mathclose{}}
\tikzstyle{vertex}=[circle, draw, inner sep=0pt, minimum size=6pt, fill=black]
\newcommand{\vertex}{\node[vertex]}
\begin{document}

\begin{abstract}
    Motivated by applications to low-rank matrix completion,
    we give a combinatorial characterization of the independent sets in
    the algebraic matroid associated to the collection of $m\times n$ rank-2 matrices
    and $n\times n$ skew-symmetric rank-2 matrices.
    Our approach is to use tropical geometry to reduce this to
    a problem about phylogenetic trees which we then solve.
    In particular, we give a combinatorial description of the collections
    of pairwise distances between several taxa that may be arbitrarily prescribed
    while still allowing the resulting dissimilarity map to be completed to a tree metric.

    \smallskip
    \noindent \textbf{Keywords:} low-rank matrix completion, algebraic matroids, tropical geometry, tree-metric completion

    \smallskip
    \noindent \textbf{MSC Classes:} 14T05, 52B40, 52C25
\end{abstract}

\maketitle

\section{introduction}\label{sec:intro}

Given a matrix where only some of the entries are known,
the low-rank matrix completion problem is to determine the missing
entries under the assumption that the matrix has some low rank $r$.
One can also assume additional structure such as (skew) symmetry or positive definiteness.
Practical applications of the low-rank matrix completion problem abound.
A well-known example is the so-called ``Netflix Problem'' of predicting an individual's
movie preferences from ratings given by several other users.
A brief survey of other applications appears in \cite{candes2010matrix}.
\\
\indent
Singer and Cucuringu show how ideas from rigidity theory can be applied to this
problem in \cite{singer2010uniqueness}.
Jackson, Jord{\'a}n, and Tibor further develop these ideas
in \cite{jackson2014combinatorial,jackson2016unique}.
Kir{\'a}ly, Theran, and Tomioka incorporate ideas from algebraic geometry 
into this rigidity-theoretic framework in \cite{kiraly-theran-tomioka2015} and
Kir{\'a}ly, Theran and Rosen further develop these ideas in \cite{kiraly-rosen-theran2013}.
We add tools from tropical geometry to this picture.
\\
\indent
Let $V$ be a determinantal variety over some algebraically closed field $\kk$.
The results in this paper concern the cases were $V = \cals_r^n(\kk)$,
the collection of $n\times n$ skew-symmetric $\kk$-matrices of rank at most $r$,
or $V = \calm_{r}^{m\times n}(\kk)$,
the collection of $m\times n$ $\kk$-matrices of rank at most $r$.
A \emph{masking operator} corresponding to some $S \subseteq \binom{[n]}{2}$
in the skew symmetric case,
or $S \subseteq [m]\times [n]$ in the rectangular case,
is a map $\Omega_S : V \rightarrow \kk^S$
that projects a matrix $M$ onto the entries specified by $S$.
In the case of skew-symmetric $n\times n$ matrices,
we view $S$ as the edge set of a graph on vertex set $[n]$,
which we denote $G(S)$.
In the case of rectangular matrices,
we view $S$ as the edge set of a bipartite graph on partite sets of size $m$ and $n$
which we also denote $G(S)$.
Context will make the proper interpretation of $G(S)$ clear.
\\
\indent
Low-rank matrix completion problems can now be phrased as:
given $\Omega_S(M)$
can we recover $M$ if we know $M \in V$?
For generic $M$ the answer to this question only depends on the observed entries $S$
and not the particular values observed.
Namely, given $\Omega(M)$ for generic $M \in V$,
$M$ may be recovered up to finitely many choices
if and only if $S$ is a spanning set of the algebraic matroid associated to $V$.
Hence it is useful to find combinatorial descriptions of the
algebraic matroids associated to various determinantal varieties.
We obtain such combinatorial descriptions for the cases
where $V = \cals_2^n(\cc)$ and $V = \calm_2^{m\times n}(\cc)$.
The most natural way to phrase our characterization is in terms of the independent sets of $V$.
Note that a subset $S$ of entries is independent in the algebraic matroid underlying $V$
if and only if $\Omega_S: V \rightarrow \cc^S$ is a dominant morphism.
The salient feature for independent sets is that $\cc\setminus\Omega_S(V)$
has Lebesgue measure zero.
We also note that our result here answers a question of 
Kalai, Nevo, and Novik in \cite{kalai2016bipartite}
to find a combinatorial classification of what they call ``minimally $(2,2)$-rigid graphs''
(posed in the paragraph after the proof of their Example 5.5).
Using our language, these are the maximal independent sets in the algebraic matroid underlying $\calm_2^{m\times n}(\cc)$.
\\
\indent
An alternating closed trail in a directed graph is
a walk $v_0,v_1,\dots,v_k$ such that each
edge appears at most once,
$v_k = v_0$,
and adjacent edges $v_{i-1}v_i$ and $v_{i}v_{i+1}$ have opposite orientations (indices taken modulo $k+1$).
We now state our main result.

\newtheorem*{thm:introMatrices}{Theorems \ref{thm:skewMatroid} and \ref{thm:rectangularMatroid}}
\begin{thm:introMatrices}
    Let $V = \cals^n_2(\cc)$ be the variety of skew-symmetric $n\times n$ matrices of rank at most $2$,
    or $V = \calm_2^{m\times n}(\cc)$ be the variety of rectangular $m \times n$ matrices of rank at most $2$.
    A subset of observed entries $S \subseteq \binom{[n]}{2}$ (skew symmetric case)
    or $S \subseteq [m]\times [n]$ (rectangular case)
    is independent in the algebraic matroid underlying $V$
    if an only if there exists some acyclic orientation of $G(S)$ that has no alternating closed trail.
\end{thm:introMatrices}

Using techniques of \cite{kiraly-theran-tomioka2015},
one can see that deciding whether a given $S \subseteq \binom{[n]}{2}$ or $S \subseteq [m]\times [n]$
{ is independent in the algebraic matroid underlying $\cals_{2}^n(\cc)$ or $\calm_2^{m\times n}(\cc)$}
is in the complexity class RP.
Hence both decision problems are also in NP.
We provide an explicit combinatorial certificate of this fact.
Existence of a polynomial time algorithm for solving either decision problem remains open.
\\
\indent
The key to our approach is to use tropical geometry
to allow us to reduce to the following easier question:
Which entries of a dissimilarity map may be arbitrarily specified
such that the resulting partial dissimilarity map may be completed to a tree metric?
Our result here is as follows.

\newtheorem*{thm:introTrees}{Theorem \ref{thm:treeMatroid}}
\begin{thm:introTrees}
    Let $S \subseteq \binom{[n]}{2}$.
    Any partial dissimilarity map
    whose known distances are given by $S$
    can be completed to a tree metric, regardless of what those specified values are,
    if and only if there exists some acyclic orientation of $G(S)$ that has no alternating closed trail.
\end{thm:introTrees}

As in the case of partial matrices,
we give an explicit combinatorial certificate showing that the corresponding decision problem is in NP
but we do not know whether a polynomial time algorithm exists.
\\
\indent
The problem of deciding whether a \emph{particular} partial dissimilarity map
is completable to a tree metric was shown to be NP-complete
in \cite{farach1995robust}.
Note that this is distinct from our decision problem here,
because we are not setting values of the observed entries.
Special cases that allow a polynomial time algorithm were investigated in \cite{guenoche2004extension,guenoche2001triangles}.
Questions about whether a partial dissimilarity map can be completed to a tree metric
with a particular topology have been addressed in \cite{dress2014matroid,dress2012lassoing}.
\\
\indent
The outline of the paper is as follows.
Section \ref{sec:tropLemmas} lays out some general theory for using
tropical geometry to characterize algebraic matroids.
Section \ref{sec:trees} contains our results relating to completion of tree metrics.
Section \ref{sec:matrices} shows how our results on partial matrices
are easily obtained from our results on trees.

\section*{Acknowledgments}
    The author learned about matrix completion from Louis Theran at the Aalto Summer School in Algebra, Combinatorics and Statistics and is grateful to him for this and
    for many helpful conversations thereafter.
    He is also grateful to Bernd Sturmfels whose Algebraic Fitness 
    worksheet given at the Aalto Summer School
    asked a question about skew-symmetric matrix completion and its tropicalization.
    Finally, he is grateful to Seth Sullivant for his encouragement on this project,
    for many helpful conversations, and for feedback on early drafts.
    The author was partially supported by the US National Science Foundation (DMS 0954865) and the David and Lucille Packard Foundation.

\section{Completion and tropical varieties}\label{sec:tropLemmas}
We begin with the necessary preliminaries from tropical geometry.
The most important parts of this section are Lemmas \ref{lem:tropicalAlgebraicMatroid}
and \ref{lem:basisTransfer} which enable us to use the polyhedral
structure of a tropical variety to gain insight into 
the corresponding algebraic matroid.

Let $\kk$ be a field and let $V \subset \kk^n$.
We let $M(V)$ denote the \emph{independence complex} of $S$
which we define to be the collection of subsets $S$ of $\{1,\dots,n\}$
such that the projection of $V$ onto the coordinates indicated by $S$
is full-dimensional in $\kk^S$.
In the case that the sets in $M(V)$ form a matroid
(e.g. $V$ is an irreducible affine variety)
we refer to $M(V)$ as the \emph{matroid underlying $V$}.

Denote by $\puis$ the field of complex formal Puiseux series.
That is, $\puis$ is the set of all formal sums $\sum_{\alpha \in J} c_\alpha t^\alpha$
where $J \subset \qq$ such that $J$ has a smallest element
and the elements of $J$ can be expressed over a common denominator.
The valuation map $\val(\cdot): \puis\rightarrow \qq$
sends $\sum_{\alpha \in J} c_\alpha t^\alpha$ to $\min\{\alpha \in J: c_{\alpha} \neq 0\}$.
For any affine variety $V$ over $\puis$,
the corresponding \emph{tropical variety} is
\[
    \trop(V) = \overline{\{(-\val(x_1),\dots,-\val(x_n)): (x_1,\dots,x_n) \in V\}} \subseteq \rr^n
\]
where the overline indicates closure in the Euclidean topology on $\rr^n$.
We sometimes refer to $\trop(V)$ as \emph{the tropicalization of $V$}.
\\
\indent
We can also tropicalize varieties over $\cc$ by lifting to $\puis$ and 
tropicalizing there..
More specifically, let $V \subseteq \cc^n$ be an affine variety over $\cc$
with ideal $I \subseteq \cc[x_1,\dots,x_n]$.
By lifting this ideal into $\puis[x_1,\dots,x_n]$ we obtain a variety $V' \subseteq (\puis)^n$.
The tropical variety $\trop(V)$ corresponding to $V$ is simply $\trop(V')$.

\begin{lemma}[\cite{yu2015algebraic}, Lemma 2]\label{lem:yuTropAlgebraic}
    Let $V$ be an irreducible affine variety over either $\cc$ or $\puis$.
    Then the independence complex of $V$ and $\trop(V)$ are the same.
\end{lemma}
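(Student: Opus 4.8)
The plan is to reduce everything to the dimension statement of the Bieri--Groves theorem. Recall that $S$ lies in the independence complex of $V$ precisely when the coordinate projection $\pi_S(V)$ is full-dimensional in its ambient space, i.e.\ when $\dim\overline{\pi_S(V)} = |S|$; for $\trop(V)$ one interprets $\dim$ as the dimension of the polyhedral set $\pi_S(\trop(V)) \subseteq \rr^S$ (the image of the finite union of polyhedra $\trop(V)$ under the linear map $\pi_S$), a dimension unchanged by passing to the Euclidean closure $\overline{\pi_S(\trop(V))}$. So it suffices to prove that $\dim\overline{\pi_S(V)} = \dim\overline{\pi_S(\trop(V))}$ for every $S \subseteq \{1,\dots,n\}$. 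First I would dispose of the case $V \subseteq \cc^n$ by passing to the lift $V' \subseteq (\puis)^n$: the independence complex is unchanged because $\dim\overline{\pi_S(V)} = |S|$ is equivalent to the elimination ideal $I(V) \cap \cc[x_i : i \in S]$ being the zero ideal, and forming elimination ideals commutes with the base change $\cc \hookrightarrow \puis$; moreover $V'$ is irreducible because $\cc$ is algebraically closed, and $\trop(V)$ is defined to be $\trop(V')$. So from now on one may assume $V$ is defined over $\puis$.

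Next I would establish that tropicalization commutes with coordinate projection, in the form $\trop(\overline{\pi_S(V)}) = \overline{\pi_S(\trop(V))}$. One inclusion is elementary: if $w \in \trop(V)$, write $w = \lim_k(-\val(x^{(k)}))$ with $x^{(k)} \in V$; since $\val$ is computed coordinatewise and $\pi_S$ is continuous, $\pi_S(w) = \lim_k(-\val(\pi_S(x^{(k)})))$, and each $\pi_S(x^{(k)})$ lies in $\overline{\pi_S(V)}$, so $\pi_S(w) \in \trop(\overline{\pi_S(V)})$; as this set is closed, $\overline{\pi_S(\trop(V))} \subseteq \trop(\overline{\pi_S(V)})$. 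For the reverse inclusion I would use the standard fact that a tropical variety depends only on the Zariski closure of the set being tropicalized, so that $\trop(\overline{\pi_S(V)})$ equals the Euclidean closure of $\{-\val(y) : y \in \pi_S(V)\} = \pi_S(\{-\val(x) : x \in V\})$, which is visibly contained in $\overline{\pi_S(\trop(V))}$.

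With this identity in hand, the proof concludes by applying Bieri--Groves: since $\overline{\pi_S(V)}$ is irreducible (it is the Zariski closure of the image of the irreducible variety $V$ under a morphism), $\dim\trop(\overline{\pi_S(V)}) = \dim\overline{\pi_S(V)}$, and therefore $\dim\overline{\pi_S(\trop(V))} = \dim\overline{\pi_S(V)} = \dim\pi_S(V)$. Consequently $S$ is in the independence complex of $V$ iff $\dim\pi_S(V) = |S|$ iff $\dim\pi_S(\trop(V)) = |S|$ iff $S$ is in the independence complex of $\trop(V)$, which is the claim.

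I expect the main obstacle to be the commutation of tropicalization with projection, and specifically the reverse inclusion: it rests on the fact that passing to the Zariski closure does not change the tropicalization — a consequence of the fundamental theorem of tropical geometry together with a density argument — and on keeping the Euclidean and Zariski closures straight throughout the argument. Once Bieri--Groves is invoked, the dimension bookkeeping is routine. (This lemma is quoted from \cite{yu2015algebraic}, so in the present paper one simply cites it; the above is how one would prove it directly.)
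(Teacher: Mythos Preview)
The paper gives no proof of this lemma; it is quoted from \cite{yu2015algebraic} and used as a black box, exactly as you note in your final parenthetical. So there is nothing to compare against beyond confirming that your direct argument is correct, which it is, and follows the standard route: reduce to $\puis$, show that tropicalization commutes with coordinate projection up to Euclidean closure, and apply Bieri--Groves to equate dimensions on both sides.

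The one step you rightly flag as delicate is the reverse inclusion $\trop(\overline{\pi_S(V)}) \subseteq \overline{\pi_S(\trop(V))}$, which you reduce to the claim that $\trop(W) = \overline{\{-\val(u):u\in U\}}$ whenever $U$ is constructible and Zariski-dense in an irreducible $W$. Your phrase ``a density argument'' is doing real work here; a clean way to cash it out is as follows. Pass to a nonempty open $U_0 \subseteq U$ dense in $W$, set $Z = W\setminus U_0$, and note that $\dim\trop(Z) < \dim\trop(W)$ by Bieri--Groves applied to $Z$. For any rational $w \in \trop(W)\setminus\trop(Z)$ the lifting half of the fundamental theorem produces $x \in W$ with $-\val(x) = w$, and necessarily $x \notin Z$, hence $x \in U_0 \subseteq U$. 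Thus $\{-\val(u):u\in U\}$ contains the dense subset of rational points in $\trop(W)\setminus\trop(Z)$, and taking Euclidean closures gives the claim. With that in hand the remaining bookkeeping is, as you say, routine.
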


Many matrix completion problems ask for a combinatorial description
of the algebraic matroid associated to a particular irreducible affine variety.
Lemma \ref{lem:yuTropAlgebraic} says that we can tackle such problems
by looking at the corresponding tropical variety instead.
The advantage in this is that tropical varieties have a useful polyhedral structure
which we now describe.

\begin{defn}
    Let $\Sigma$ be a rational fan in $\rr^n$ of pure dimension $d$.
    We say that $\Sigma$ is \emph{balanced} if we can associate a positive integer $m(\sigma)$,
    called the \emph{multiplicity}
    to each full-dimensional cone $\sigma$
    in a way such that for each cone $\tau \in \Sigma$ of dimension $d-1$,
    \[
        \sum_{\sigma \supsetneq \tau} m(\sigma)v_\sigma \in \span(\tau)
    \]
    where $v_\sigma$ is the first lattice point on the ray $\sigma / \span(\tau)$.
    We say that $\Sigma$ is \emph{connected through codimension one}
    if for any $d$-dimensional cones $\sigma, \rho \in \Sigma$,
    there exists a sequence of $d$-dimensional cones $\sigma = \sigma_0,\sigma_1,\dots,\sigma_k = \rho \in \Sigma$
    such that $\sigma_i \cap \sigma_{i+1}$ has dimension $d-1$.
\end{defn}

The well-known structure theorem for tropical varieties
applied to the special case where the defining equations
have constant coefficients gives us the following.

\begin{thm}[\cite{maclagan2015introduction}, Theorem 3.3.5]\label{thm:tropicalStructure}
    Let $V$ be an irreducible $d$-dimensional affine variety over $\cc$.
    Then $\trop(V)$ is the support of a balanced fan of pure dimension $d$
    that is connected through codimension one.
\end{thm}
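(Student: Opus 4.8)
\noindent
The statement is the constant-coefficient specialization of the Structure Theorem for tropical varieties, so the plan is to reduce it to \cite[Theorem 3.3.5]{maclagan2015introduction} and then argue that the hypothesis of constant coefficients upgrades ``polyhedral complex'' to ``fan.''

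\noindent
First I would make the base change to $\puis$ explicit. Writing $I \subseteq \cc[x_1,\dots,x_n]$ for the ideal of $V$, let $V'$ be the subvariety of $(\puis)^n$ cut out by the extended ideal $I' := I\cdot\puis[x_1,\dots,x_n]$, so that $\trop(V) = \trop(V')$ by definition. Since $\puis$ is algebraically closed of characteristic zero (Newton--Puiseux) and extension of scalars from the algebraically closed field $\cc$ preserves irreducibility and Krull dimension, $V'$ is an irreducible affine variety of dimension $d$. If $V$ lies in a coordinate hyperplane the claim follows by induction on $n$ after relabeling; otherwise $V' \cap (\puis^*)^n$ is a dense open irreducible $d$-dimensional subvariety of the torus $(\puis^*)^n$ whose tropicalization has the same support as $\trop(V')$.

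\noindent
Next I would invoke the Structure Theorem \cite[Theorem 3.3.5]{maclagan2015introduction} for the torus subvariety $V' \cap (\puis^*)^n$. This gives that $\trop(V)$ is the support of a balanced weighted $\qq$-rational polyhedral complex, pure of dimension $d$, connected through codimension one, where the weights are the multiplicities $m(\sigma)$ and the balancing condition is the one appearing in the definition above.

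\noindent
Finally I would promote this polyhedral complex to a fan. The point is that $I'$ is generated by polynomials whose coefficients lie in $\cc$ and hence all have valuation $0$; so for $w \in \rr^n$ the initial ideal $\mathrm{in}_w(I')$ simply selects, term by term, the monomials minimizing $\langle w, \cdot\rangle$, exactly as in the classical constant-coefficient theory, and it depends only on the ray $\rr_{>0}\, w$. Consequently the Gr\"obner complex of $I'$ is the ordinary Gr\"obner \emph{fan}, and $\trop(V)$ --- the union of those cones on which $\mathrm{in}_w(I')$ contains no monomial --- is a subfan of it; its support is therefore closed under positive scaling and is a finite union of rational polyhedral cones, so it is the support of a fan, and the balancing, purity, and codimension-one connectivity established above are properties of the support and carry over verbatim. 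The main difficulty here is not mathematical depth but bookkeeping: reconciling the paper's definition of $\trop$ of an \emph{affine} variety (in particular the points lying on coordinate hyperplanes) with the torus-subvariety setting in which the Structure Theorem is stated, and justifying carefully that constant coefficients force the Gr\"obner complex to be a fan.
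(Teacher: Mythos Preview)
The paper does not give a proof of this theorem at all: it is stated with a citation to \cite[Theorem 3.3.5]{maclagan2015introduction} and used as a black box, so there is no ``paper's own proof'' to compare against. Your proposal is therefore not a reproduction of anything in the paper but rather an outline of how to extract the constant-coefficient statement from the general Structure Theorem in the cited reference.

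As such an outline, your argument is sound. The reduction to the torus case is handled correctly (since $V$ is irreducible, either $V$ sits inside a coordinate hyperplane and you induct, or $V\cap(\puis^*)^n$ is dense and has the same tropicalization), the invocation of the Structure Theorem is appropriate, and the key observation that constant coefficients force $\mathrm{in}_w(I')$ to depend only on the open cone containing $w$ in the classical Gr\"obner fan is exactly what upgrades ``polyhedral complex'' to ``fan.'' The only place one might ask for more is the bookkeeping you yourself flag at the end: the paper's $\trop(V)$ is defined via coordinatewise $-\val$ on the affine variety, which tacitly restricts to points with all coordinates nonzero (since $\val(0)$ is undefined), so identifying it with the tropicalization of $V\cap(\puis^*)^n$ is immediate rather than subtle. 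But none of this is a gap; for the purposes of this paper the theorem is simply quoted, and your sketch would be a perfectly adequate justification if one were required.
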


The following two lemmas give us ways that one can use the polyhedral structure
of $\trop(V)$ to gain insight into the structure of $M(V)$.
When we refer to a cone in $\trop(V)$,
we mean in a polyhedral subdivision of $\trop(V)$ that is balanced
and connected through codimension one.

\begin{lemma}\label{lem:tropicalAlgebraicMatroid}
    Let $V \subseteq \cc^n$ be an irreducible affine variety of dimension $d$.
    Then $S \subseteq [n]$ is independent in $M(V)$ if and only if
    $S$ is independent in $M(\span(\sigma))$ for some $d$-dimensional
    cone $\sigma$ in $\trop(V)$.
\end{lemma}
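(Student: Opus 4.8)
Let me sketch how I'd prove this. We need to show $S \subseteq [n]$ is independent in $M(V)$ iff $S$ is independent in $M(\mathrm{span}(\sigma))$ for some $d$-dimensional cone $\sigma$ in $\mathrm{trop}(V)$.

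The statement connects the independence complex of $V$ with the independence complexes of the linear spans of the maximal cones of $\mathrm{trop}(V)$. Note that for a linear subspace $L = \mathrm{span}(\sigma)$, the set $S$ is independent in $M(L)$ exactly when the coordinate projection $\pi_S\colon L \to \mathbb{R}^S$ is surjective, i.e. has full rank $|S|$; this is the ordinary linear matroid.

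The plan:

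- **Key reduction via Lemma \ref{lem:yuTropAlgebraic}:** By Lemma \ref{lem:yuTropAlgebraic}, $M(V) = M(\mathrm{trop}(V))$, so it suffices to characterize independence in the tropical variety combinatorially. So I can work entirely with the polyhedral set $T := \mathrm{trop}(V)$, which by Theorem \ref{thm:tropicalStructure} is the support of a balanced fan of pure dimension $d$.

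- **Independence in $T$ means a projection is full-dimensional:** $S$ is independent in $M(T)$ iff $\pi_S(T)$ is full-dimensional (dimension $|S|$) in $\mathbb{R}^S$. Here $T$ is a finite union of $d$-dimensional rational polyhedral cones $\sigma$. Since $\pi_S$ is linear, $\pi_S(T) = \bigcup_\sigma \pi_S(\sigma)$. A finite union of closed sets is full-dimensional iff one of them is, so $\pi_S(T)$ is full-dimensional iff $\pi_S(\sigma)$ is full-dimensional for some maximal cone $\sigma$.

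- **Full-dimensionality of $\pi_S(\sigma)$ vs. of $\pi_S(\mathrm{span}(\sigma))$:** Since $\sigma$ is a $d$-dimensional cone, it has nonempty interior in $\mathrm{span}(\sigma)$, which is a $d$-dimensional linear subspace. The image $\pi_S(\sigma)$ contains $\pi_S$ of an open subset of $\mathrm{span}(\sigma)$; and $\pi_S|_{\mathrm{span}(\sigma)}$ is a linear map, so its image of a full-dimensional subset of $\mathrm{span}(\sigma)$ has the same dimension as $\pi_S(\mathrm{span}(\sigma))$. Hence $\pi_S(\sigma)$ is full-dimensional in $\mathbb{R}^S$ iff $\pi_S(\mathrm{span}(\sigma))$ is, which is exactly the statement that $S$ is independent in $M(\mathrm{span}(\sigma))$.

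- **Independence of the conclusion from the chosen subdivision:** The lemma speaks of "a cone in $\mathrm{trop}(V)$," meaning in some balanced fan structure connected through codimension one. Subdividing a fan does not change the support $T$ nor the collection of linear spans of maximal cones (subdividing a $d$-cone into smaller $d$-cones leaves each span equal to the span of the original). So the set of subspaces $\{\mathrm{span}(\sigma) : \sigma \text{ a maximal cone}\}$ is an invariant of $T$, and the criterion is well-defined regardless of subdivision. (The balanced/connected hypotheses, and hence Theorem \ref{thm:tropicalStructure}, are not actually needed for this particular lemma beyond guaranteeing $T$ is pure of dimension $d$ — they will matter later for Lemma \ref{lem:basisTransfer}.)

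**Main obstacle.** The one place requiring genuine care is the step "$\pi_S(\sigma)$ full-dimensional $\iff$ $\pi_S(\mathrm{span}(\sigma))$ full-dimensional": one must argue that restricting a coordinate projection to a full-dimensional cone (rather than all of the ambient subspace) cannot drop the dimension of the image. The forward direction is immediate ($\pi_S(\sigma) \subseteq \pi_S(\mathrm{span}(\sigma))$). For the reverse, the clean argument is: $\pi_S|_{\mathrm{span}(\sigma)}$ is a linear map of rank $|S|$ (assuming $S$ independent in $M(\mathrm{span}(\sigma))$), hence an open map onto $\mathbb{R}^S$; the interior of $\sigma$ relative to $\mathrm{span}(\sigma)$ is nonempty, so its image is a nonempty open subset of $\mathbb{R}^S$, forcing $\pi_S(\sigma)$ to be full-dimensional. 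Dually, if $\pi_S|_{\mathrm{span}(\sigma)}$ has rank $< |S|$ then its entire image — and a fortiori $\pi_S(\sigma)$ — lies in a proper subspace. That dichotomy closes the argument. Everything else is bookkeeping about finite unions of closed sets and invariance under subdivision.
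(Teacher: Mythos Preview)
Your proposal is correct and takes essentially the same approach as the paper: reduce via Lemma~\ref{lem:yuTropAlgebraic} to independence in $M(\trop(V))$, use that $\trop(V)$ is a finite union of $d$-dimensional cones so the projection is full-dimensional iff some cone's projection is, and then identify full-dimensionality of $\pi_S(\sigma)$ with that of $\pi_S(\span(\sigma))$. Your treatment is in fact more careful than the paper's on the cone-versus-span step, and your remark on invariance under subdivision is extra but not needed for the lemma.
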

\begin{proof}
    Let $S$ be independent in $M(V)$.
    By Lemma \ref{lem:yuTropAlgebraic}, the projection of
    $\trop(V)$ onto $\rr^S$ is full dimensional.
    In particular, the projection of some maximal dimensional cone
    $\sigma \in \trop(V)$ onto $\rr^S$ is full dimensional and therefore $S$
    is independent in the matroid $M(\span(\sigma))$.
    \\
    \indent
    Now let $S$ be independent in $M(\span(\sigma))$ for some maximal cone $\sigma \in \trop(V)$.
    Then the projection of $\span(\sigma)$ onto $\rr^S$ is full dimensional.
    Therefore the same holds for $\sigma$ and therefore $\trop(V)$.
    So $S$ is in the independence complex of $\trop(V)$
    and therefore independent in $M(V)$ by Lemma \ref{lem:yuTropAlgebraic}.
\end{proof}

\begin{lemma}\label{lem:basisTransfer}
    Let $V \subseteq \cc^n$ be an irreducible $d$-dimensional affine variety.
    Let $\tau$ be a $d-1$ dimensional cone of $\trop(V)$
    and let $\sigma_1,\dots,\sigma_k$ be the $d$-dimensional cones in $\trop(V)$
    containing $\tau$.
    If $B \subseteq [n]$ is a basis of $M(\span(\sigma_1))$ then
    $B$ is also a basis of $M(\span(\sigma_i))$ for some $i \neq 1$.
\end{lemma}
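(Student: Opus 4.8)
The plan is to push the balancing condition of $\trop(V)$ at the codimension-one cone $\tau$ forward along the coordinate projection $\pi\colon\rr^n\to\rr^B$, and read off that some $\sigma_i$ with $i\neq1$ must still project onto all of $\rr^B$. Since $B$ is a basis of $M(\span(\sigma_1))$ we have $|B|=d$ and $\pi(\span(\sigma_1))=\rr^B$. The first step is to record that $\pi$ is injective on $\span(\tau)$ and that $\pi(\span(\tau))$ is a hyperplane of $\rr^B$. Indeed, $\span(\tau)\subseteq\span(\sigma_1)$ has codimension one, so $\span(\sigma_1)=\span(\tau)+\rr v$ for any $v\in\sigma_1\setminus\span(\tau)$, and then
\[
    d=\dim\pi(\span(\sigma_1))\le\dim\pi(\span(\tau))+1\le\dim\span(\tau)+1=d,
\]
so all inequalities are equalities: $\pi|_{\span(\tau)}$ is injective, $\dim\pi(\span(\tau))=d-1$, and $\pi(v)\notin\pi(\span(\tau))$.

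Next I would invoke the balancing condition at $\tau$. For each $i$ I choose a vector $v_i\in\sigma_i$ representing the first lattice point of the ray $\sigma_i/\span(\tau)$, so that in addition $\span(\sigma_i)=\span(\tau)+\rr v_i$; the balancing identity then reads $\sum_{i=1}^k m(\sigma_i)v_i\in\span(\tau)$. Applying $\pi$ gives $\sum_{i=1}^k m(\sigma_i)\pi(v_i)\in\pi(\span(\tau))$. By the previous paragraph $\pi(v_1)\notin\pi(\span(\tau))$, and since $m(\sigma_1)>0$ and $\pi(\span(\tau))$ is a linear subspace, also $m(\sigma_1)\pi(v_1)\notin\pi(\span(\tau))$. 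Hence $\sum_{i\ge2}m(\sigma_i)\pi(v_i)\notin\pi(\span(\tau))$; in particular $k\ge2$ and there is some $i\in\{2,\dots,k\}$ with $\pi(v_i)\notin\pi(\span(\tau))$. For that index, $\pi(\span(\sigma_i))=\pi(\span(\tau))+\rr\pi(v_i)$ has dimension $(d-1)+1=d$ inside $\rr^B$, so $\pi(\span(\sigma_i))=\rr^B$ and $B$ is a basis of $M(\span(\sigma_i))$.

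The argument is short and I do not anticipate a serious obstacle; the points that need care are bookkeeping ones. First, the vectors $v_\sigma$ in the balancing condition are only canonically defined modulo $\span(\tau)$, so I must check that honest representatives in $\rr^n$ can be chosen that additionally satisfy $\span(\sigma_i)=\span(\tau)+\rr v_i$, and that membership of a linear combination of the $\pi(v_i)$ in $\pi(\span(\tau))$ is independent of this choice — both are immediate. Second, the conclusion is vacuous unless $k\ge2$, so it is worth noting (as above) that balancing itself forces $k\ge2$: a lone term $m(\sigma_1)v_1\in\span(\tau)$ with $m(\sigma_1)>0$ would force the primitive generator of $\sigma_1/\span(\tau)$ to vanish, which is absurd. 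Everything else is a routine dimension count.
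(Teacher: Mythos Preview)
Your proof is correct and follows essentially the same route as the paper: both push the balancing identity at $\tau$ through the projection $\pi:\rr^n\to\rr^B$, observe that $\pi(\span(\tau))$ is a hyperplane in $\rr^B$ because $B$ is a basis of $\span(\sigma_1)$, and conclude from $m(\sigma_1)\pi(v_1)\notin\pi(\span(\tau))$ that some other $\pi(v_i)$ also lies outside that hyperplane. The only cosmetic difference is that the paper picks a linear functional $c$ cutting out the hyperplane and argues via $c^Tv_i\neq0$, whereas you phrase the same thing as $\pi(v_i)\notin\pi(\span(\tau))$; your bookkeeping (keeping multiplicities explicit, noting $k\ge2$) is a bit tidier.
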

\begin{proof}
    Let $L = \span(\tau)$ and $L_i = \span(\sigma_i)$.
    By Theorem \ref{thm:tropicalStructure} there exist $v_i \in \sigma_i\setminus L$
    such that $\sum_{i=1}^k v_i \in L$.
    Let $B$ be a basis of $L_1$.
    Then the projection of $L_1$ onto $\rr^B$ is all of $\rr^B$
    and the projection of $L$ onto $\rr^B$ is some hyperplane
    $\{x \in \rr^B : c^Tx = 0\}$.
    By padding with extra zeros, we can extend $c$ to an element of $\rr^n$.
    Then $L = L_1 \cap \{x \in \rr^n : c^Tx = 0\}$ and $c^Tv_i \neq 0$.
    Since $\sum_{i=1}^k v_i \in L$,
    we must have $\sum_{i=1}^k c^T v_i = 0$.
    Therefore there must exist some $i \neq 1$ such that $c^Tv_i \neq 0$.
    Since $L_i = L + \span(v_i)$, the projection of $L_i$
    onto $\rr^B$ is all of $\rr^B$.
    So $B$ is a basis of $L_i$.
\end{proof}

We end this section by noting a nice feature about projections
of tropical varieties which was noted by Yu in \cite{yu2015algebraic}.

\begin{prop}\label{prop:projFullDimensional}
    Let $\kk$ be either $\cc$ or $\puis$ and let $V \subseteq \kk^n$ be an 
    irreducible affine variety.
    If $S$ is independent in $M(V)$ then the projection of $\trop(V)$
    onto $\rr^S$ is all of $\rr^S$.
\end{prop}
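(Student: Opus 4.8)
The plan is to prove the two statements that (i) $\pi_S(\trop(V))$ is closed in $\rr^S$, where $\pi_S\colon \rr^n \to \rr^S$ denotes the coordinate projection, and (ii) it is dense in $\rr^S$. For (i): by the structure theorem for tropical varieties (Theorem \ref{thm:tropicalStructure}, and its counterpart over $\puis$) $\trop(V)$ is the support of a finite polyhedral complex, hence a finite union of closed polyhedra; a linear image of a polyhedron is a polyhedron, so $\pi_S(\trop(V))$ is a finite union of polyhedra and in particular closed. For (ii) I would first reduce to the case $S = B$, a basis of $M(V)$: every independent set extends to a basis, and if $\pi_B(\trop(V)) = \rr^B$ then composing with the further coordinate projection $\rr^B \to \rr^S$ gives $\pi_S(\trop(V)) = \rr^S$. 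So assume $|B| = d = \dim V$ and, for contradiction, that $U := \pi_B(\trop(V)) \subsetneq \rr^B \cong \rr^d$.

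Since $U$ is closed, nonempty, and proper, its boundary is nonempty, and $\partial U$ has dimension $\le d-1$. The maximal cones $\sigma$ of $\trop(V)$ for which $B$ is \emph{not} a basis of $M(\span(\sigma))$ satisfy $\dim \pi_B(\sigma) < d$, so their union is nowhere dense; hence a generic point $p$ in the relative interior of a top-dimensional cell of $\partial U$ is the image of a relative-interior point of a maximal cone $\sigma_1$ of $\trop(V)$ on which $\pi_B$ is injective (i.e. $B$ is a basis of $M(\span(\sigma_1))$), and $p$ lies in the relative interior of $\pi_B(\tau)$ for a facet $\tau$ of $\sigma_1$, which is a codimension-one cone of $\trop(V)$. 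Near $p$, $U$ contains the closed half-space germ $\pi_B(\sigma_1)$ on one side of the hyperplane $\pi_B(\span(\tau))$. I would then re-run the computation from the proof of Lemma \ref{lem:basisTransfer} applied to $\tau$: for the balancing vectors $v_i \in \sigma_i \setminus \span(\tau)$ (one per maximal cone $\sigma_i \supseteq \tau$) one has $\sum_i v_i \in \span(\tau)$, so, writing $c$ for the functional with $\pi_B(\span(\tau)) = \{y \in \rr^B : c^\top y = 0\}$ pulled back to $\rr^n$, the numbers $c^\top v_i$ sum to zero; since $c^\top v_1 \neq 0$, some $j \neq 1$ has $c^\top v_j \neq 0$ \emph{and} of the opposite sign. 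Nonvanishing forces $B$ to be a basis of $M(\span(\sigma_j))$, and the opposite sign forces $\pi_B(\sigma_j)$ to be, near $p$, the half-space germ on the other side of $\pi_B(\span(\tau))$. Thus $U$ contains a full neighborhood of $p$, contradicting $p \in \partial U$.

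The substantive work is the polyhedral bookkeeping in the middle paragraph — fixing a polyhedral complex structure on $\trop(V)$, choosing the generic boundary point, and checking it can be placed over the relative interior of the image of a facet of a ``basis'' maximal cone — which is routine; the one point to flag is that the proof of Lemma \ref{lem:basisTransfer} actually delivers the needed strengthening about \emph{which side} $\sigma_j$ lands on, whereas its statement only asserts that some such $\sigma_j$ exists, so I would need to invoke that proof rather than the lemma as stated. I regard this last extraction as the main obstacle to a clean writeup. An essentially formal alternative is to use that tropicalization commutes with coordinate projections up to closure, $\overline{\pi_S(\trop(V))} = \trop(\overline{\pi_S(V)})$ (the observation of Yu in \cite{yu2015algebraic}, a consequence of the Fundamental Theorem of tropical geometry; see \cite{maclagan2015introduction}): since $S$ is independent and $V$ irreducible, $\overline{\pi_S(V)}$ is an irreducible subvariety of $\kk^S$ of dimension $|S| = \dim \kk^S$, hence equals $\kk^S$, whose tropicalization is $\rr^S$; combined with closedness of $\pi_S(\trop(V))$ from (i), this gives $\pi_S(\trop(V)) = \rr^S$ at once.
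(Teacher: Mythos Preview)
The paper does not supply its own proof of this proposition; it simply attributes the fact to Yu \cite{yu2015algebraic}. Your ``essentially formal alternative'' at the end \emph{is} Yu's argument and is the intended route: tropicalization commutes with coordinate projection up to closure, independence of $S$ forces $\overline{\pi_S(V)} = \kk^S$, hence $\trop(\overline{\pi_S(V)}) = \rr^S$, and the closedness you established in (i) removes the closure. So in that sense your proposal already contains the paper's proof.

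Your first, balancing-based argument is a genuinely different and more hands-on approach, and the underlying idea --- using the balancing condition at a codimension-one face $\tau$ to push $U = \pi_B(\trop(V))$ across its own boundary --- is sound. One slip to flag: you cannot have $p$ be the image of a \emph{relative-interior} point of $\sigma_1$ with $\pi_B$ injective on $\span(\sigma_1)$ and also have $p \in \partial U$, since the image of the relative interior would then be open in $\rr^B$. What you actually need is that $p$ lies on the boundary of $\pi_B(\sigma_1)$, in the relative interior of $\pi_B(\tau)$ for some facet $\tau$ of $\sigma_1$; arranging this for a generic boundary point (and ruling out that $p$ is only hit by ``bad'' cones or by lower-dimensional faces) is exactly the polyhedral bookkeeping you flag as routine, and it does take a little more care than your sketch suggests. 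Since the Yu argument is essentially one line once (i) is in hand, that is the cleaner writeup; the balancing argument is a nice illustration of how Lemma~\ref{lem:basisTransfer} (or rather its proof, as you correctly note) encodes the same information.
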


\section{Tree metrics and tree matroids}\label{sec:trees}
In this section we determine
which entries of a dissimilarity map can be arbitrarily specified
while still allowing completion to a tree metric.
We begin with the necessary preliminaries on tree metrics.
Proposition \ref{prop:treesTropStructure} describes the tropical
and polyhedral structure on the set of tree metrics.
Lemma \ref{lemma:caterpillar} reduces our question about completion
to an arbitrary tree metric to the question about completion to a tree
metric whose topology is a caterpillar tree.
We answer this simpler question in Proposition \ref{prop:caterpilarMatroid} and give 
the section's main result in Theorem \ref{thm:treeMatroid}.
\\
\indent
We now give the necessary background on tree metrics.
For a more detailed account, see \cite{semple2003phylogenetics}.
Let $X$ be a set.
A \emph{dissimilarity map on $X$} is a function $\delta : X\times X \rightarrow \rr$
such that $\delta(x,x) = 0$ and $\delta(x,y) = \delta(y,x)$ for all $x,y \in X$.
If $X = \{x_1,\dots,x_n\}$ and $[n] = \{1,\dots,n\}$
then there is a natural bijection between dissimilarity maps on $X$
and points in $\rr^{\binom{[n]}{2}}$.
Namely, denoting the entry of a point $d \in \rr^{\binom{[n]}{2}}$ corresponding to $\{i,j\}$
by $d_{ij}$ or $d_{ji}$,
we associate a dissimilarity map $\delta$ on $X$ with the point $d \in \rr^{\binom{[n]}{2}}$
such that $d_{ij} = \delta(x_i,x_j)$.
Hence we will often speak of dissimilarity maps
as if they were merely points in $\rr^{\binom{[n]}{2}}$.
\\
\indent
A tree with no vertices of degree $2$ with leaf set $X$ is called
an $X$-tree.
Let $T$ be an $X$-tree and let $w$ be an edge-weighting on $T$
such that $w(e) > 0$ for all internal edges $e$ of $T$.
Some authors require that $w(e)> 0$ for \emph{any} edge $e$ of $T$,
but we do not.
The triple $(T,X,w)$ gives rise to a dissimilarity map
$d$ where $d_{ij}$ is the sum of the edge weights given by $w$
along the unique path from $x_i$ to $x_j$ in $T$.
Any dissimilarity map $d$ that arises from a triple in this way is called a \emph{tree metric}.
For example, if $X = \{1,2,3,4\}$,
then the dissimilarity map
$d = (d_{12},d_{13},d_{14},d_{23},d_{24},d_{34}) = (0,3,-2,5,0,-1)$
is a tree metric because it can be displayed on a tree as in Figure \ref{fig:treeMetric}.
Given a tree metric $d$ on $X$, there is a unique $X$-tree $T$
that realizes $d$ as such \cite[Theorem 7.1.8]{semple2003phylogenetics}.
This tree $T$ is called the \emph{topology} of $d$.
A tree $T$ is \emph{binary} if all internal vertices have degree three.

\begin{figure}
    \includegraphics[scale=0.4]{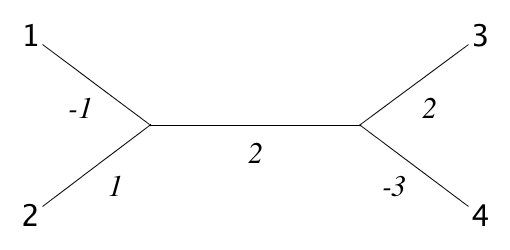}
    \caption{An edge-weighted tree with all internal edge weights positive
    whose leaves are labeled by $\{1,2,3,4\}$ showing that $d = (d_{12},d_{13},d_{14},d_{23},d_{24},d_{34}) = (0,3,-2,5,0,-1)$ is a tree metric}\label{fig:treeMetric}
\end{figure}

\begin{prop}[\cite{semple2003phylogenetics}, Theorem 7.2.6]
    A dissimilarity map $d \in \rr^{\binom{[n]}{2}}$ is a tree metric if and only if
    it satisfies
    \[
        d_{ij} + d_{kl} \le \max\{d_{ik} + d_{jl},d_{il} + d_{jk}\}
    \]
    for all distinct $i,j,k,l \in [n]$.
\end{prop}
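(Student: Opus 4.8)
The plan is to prove the two implications separately: necessity by a direct computation on four-leaf subtrees, and sufficiency by induction on $n$.

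For necessity, suppose $d$ is the tree metric arising from a triple $(T,X,w)$. Fixing four distinct leaves $i,j,k,l$, I would pass to the induced subtree obtained by taking the union of the three leaf-to-leaf paths among them and suppressing degree-two vertices. This is a quartet tree whose single internal edge separates the four leaves into two pairs, say $\{i,j\}$ and $\{k,l\}$. A short edge-length bookkeeping then shows that $d_{ik}+d_{jl}$ and $d_{il}+d_{jk}$ are equal and each exceed $d_{ij}+d_{kl}$ by exactly twice the length of that separating edge, whereas $d_{ij}+d_{kl}$ omits it. Since the separating edge is internal to $T$ (every vertex along it has degree at least two, so it is a concatenation of internal edges of $T$, or is degenerate), its length is nonnegative; hence the two cross-sums are equal and at least $d_{ij}+d_{kl}$. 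The displayed inequality therefore holds for the labeling that places the ``matching'' pairing on the left, and for the other two labelings it holds with equality. Running over all labelings of the quartet yields the inequality for every assignment of $i,j,k,l$.

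For sufficiency I would induct on $n$. The base case $n\le 3$ is immediate: on three leaves one solves the linear system $d_{12}=a_1+a_2$, $d_{13}=a_1+a_3$, $d_{23}=a_2+a_3$ for the three pendant weights of a star, which is always possible since no internal edge is present and hence no positivity constraint arises (and there are no four-point conditions to check). For the inductive step, the restriction of $d$ to $[n-1]$ again satisfies the four-point condition, so by induction it is realized by a tree $(T',w')$, whose topology is unique. It then remains to graft leaf $n$ onto $T'$. I would use the four-point conditions on the quartets $\{a,b,c,n\}$ to locate a point $p$ of $T'$ and a pendant length $w_n$ with $d_{in}=d_{T'}(i,p)+w_n$ for every $i\in[n-1]$, reading off the data from Gromov-product expressions such as $\frac{1}{2}(d_{an}+d_{bn}-d_{ab})$, and then subdividing the appropriate edge of $T'$ when $p$ lies in its interior.

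The main obstacle is precisely the consistency of this grafting. Different triples of leaves in $[n-1]$ each propose an attachment point and a pendant length, and I must show that these all coincide, that the resulting $p$ and $w_n$ reproduce every distance $d_{in}$, and that any new internal edge created by subdividing at $p$ receives positive weight. This is exactly where the four-point conditions on the quartets containing $n$ are consumed: the quartet topology forced by each inequality pins down which geodesic of $T'$ carries $p$ and at what position, overlapping triples share the same data, and the nonnegative ``gap'' $\max-(\text{third sum})$ in each inequality is twice the length of the relevant internal edge, guaranteeing the required positivity. An alternative, more structural route would identify the set of tree metrics with the tropical Grassmannian $\gr(2,n)$ and recognize the four-point inequalities as its tropical Plücker relations, but the elementary induction above is self-contained and does not presuppose the polyhedral description of tree space developed later in this section.
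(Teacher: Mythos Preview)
The paper does not prove this proposition at all; it is quoted verbatim from \cite{semple2003phylogenetics} (their Theorem 7.2.6) and used as background, so there is no argument in the paper to compare yours against.

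That said, your outline is the classical one (essentially Buneman's argument, which is what the cited reference presents) and is sound. Two places could be tightened. In the necessity direction, the reason the separating path in the induced quartet consists only of internal edges of $T$ is simply that both of its endpoints are internal vertices of $T$, hence no edge on it is pendant; your parenthetical about degree at least two is not quite the right justification. In the sufficiency direction you correctly isolate the real work: showing that all triples $\{a,b,c\}\subseteq[n-1]$ propose the same attachment point and pendant length for $n$, and that any edge subdivided in the process keeps strictly positive weight on its internal pieces. You assert that the four-point conditions on quartets containing $n$ force this, which is true, but a complete proof has to actually carry out that verification (comparing Gromov products for overlapping triples and checking that the ``gap'' in each inequality supplies the needed positivity). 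Since you flag this as the main obstacle rather than claiming it is obvious, I would call your proposal a correct plan with the standard details left to fill in, matching the treatment in the reference the paper cites.
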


Authors who require that all edge weights be positive in the definition
of a tree metric have a similar four point condition.
The only difference is that $i,j,k,l$ need not be distinct.
We denote the set of all tree metrics on a set of size $n$ by $\calt_n$.
\\
\indent
The following proposition summarizes many known facts
about the polyhedral and tropical structure of $\calt_n$ (references are given in the proof).
In particular, it tells us how to realize $\calt_n$ as a tropical variety
along with the polyhedral subdivision guaranteed to exist by Theorem \ref{thm:tropicalStructure}.
{ We note that it would not be true had we taken the more restrictive definition of tree
metric that required all (as opposed to just internal) edge weights to be positive.}
Recall that the Grassmannian $\gr_{k,n} \subset \kk^{\binom{[n]}{k}}$ is the irreducible affine variety
parameterized by the $k\times k$ minors of a $k\times n$ matrix over $\kk$.
\begin{prop}\label{prop:treesTropStructure}
    The space of phylogenetic trees $\calt_n$ is $\trop(\gr_{2,n})$.
    We can give $\calt_n$ a balanced fan structure 
    connected through codimension one as follows.
    Each open cone is the collection of tree metrics whose topology is $T$
    for a particular tree $T$.
    Such a cone is maximal if and only if $T$ is binary.
    The cone has codimension one if and only if
    $T$ can be obtained from a binary tree by contracting exactly one edge.
    Given a cone $\tau$ of codimension one corresponding to tree $T$,
    the cones containing $\tau$ correspond to the three binary trees
    that can be contracted to $T$.
\end{prop}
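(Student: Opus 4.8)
The plan is to assemble this proposition from known facts in the literature, checking carefully that each piece is compatible with our convention that only \emph{internal} edge weights must be positive. The first task is the identification $\calt_n = \trop(\gr_{2,n})$. This is the classical theorem of Speyer and Sturmfels on the tropical Grassmannian, which shows that $\trop(\gr_{2,n})$ is precisely the space of tree metrics (equivalently, the points satisfying the four-point condition from the preceding proposition). I would cite \cite{maclagan2015introduction} (the tropical Grassmannian $\gr_{2,n}$ section) and \cite{semple2003phylogenetics} for the translation between the four-point condition and tree realizability. The key subtlety to flag here, exactly as the surrounding text does, is that permitting nonpositive weights on pendant edges is what makes $\calt_n$ equal to the full tropicalization rather than just a subcone of it; with the restrictive definition one would only get the part of $\trop(\gr_{2,n})$ lying in a certain polyhedral region, which is not a fan.

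Next I would describe the fan structure. For each $X$-tree topology $T$, the set of tree metrics with that topology is a relatively open polyhedral cone: it is cut out by requiring the internal edge weights (each expressible as a linear combination of the $d_{ij}$ via the four-point / Buneman relations) to be strictly positive, with no constraint on pendant edge lengths, so it is genuinely a cone (closed under positive scaling and, on its linear span, a relatively open polyhedral cone). The dimension count is the standard one: a binary $X$-tree on $n$ leaves has $2n-3$ edges, so the corresponding cone has dimension $2n-3$, and one checks $\dim \gr_{2,n} = 2n-3$ as well (it is parameterized by a $2\times n$ matrix modulo the $\mathrm{GL}_2$ and torus actions: $2n - 4 + 1 = 2n-3$, matching after accounting for the cone point). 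Hence maximal cones correspond exactly to binary topologies. A codimension-one cone corresponds to a topology with exactly one internal vertex of degree four (all others degree three), i.e.\ a tree obtained from a binary tree by contracting a single internal edge; and the three ways to resolve a degree-four vertex into two degree-three vertices joined by an edge give exactly the three binary trees containing that codimension-one cone in their closure. This is the combinatorics of the three ``quartet'' resolutions and is spelled out in \cite{semple2003phylogenetics} and \cite{maclagan2015introduction}.

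Finally, balancedness and connectedness through codimension one come for free from Theorem \ref{thm:tropicalStructure} once we know $\calt_n = \trop(\gr_{2,n})$ and that $\gr_{2,n}$ is an irreducible affine variety of dimension $2n-3$ (irreducibility of the Grassmannian is classical). The only thing to verify is that the fan structure just described is \emph{a} balanced subdivision in the sense used in the excerpt — but the structure theorem guarantees the existence of such a structure, and uniqueness of the coarsest fan structure on a tropical variety (again \cite{maclagan2015introduction}) together with the fact that the topology-cones are exactly the cones on which the $\trop$ map behaves linearly identifies our description with it; in this case all multiplicities are $1$, and the local balancing at a codimension-one cone $\tau$ is precisely the statement that the three primitive ray directions of the resolving binary trees sum into $\span(\tau)$, which is the well-known tropical Plücker relation $\min$ attained twice. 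I expect the main obstacle to be purely expository rather than mathematical: stating the dimension bookkeeping and the quartet-resolution combinatorics cleanly, and being scrupulous about the pendant-edge-sign convention at every step so that the claimed fan (as opposed to a mere polyhedral complex) structure is correct. I would therefore keep the proof short, citing \cite{maclagan2015introduction} and \cite{semple2003phylogenetics} for the substance and devoting the prose to the convention check and the identification of the cone poset.
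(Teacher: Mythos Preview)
Your approach is the same in spirit as the paper's---assemble the proposition from the literature---but the specific references and one logical step differ. The paper cites Speyer--Sturmfels \cite{speyer2004tropical} directly for $\calt_n = \trop(\gr_{2,n})$, using the lineality-space argument to handle the pendant-edge convention; it cites \cite{billera2001geometry} for the fan structure; and it obtains connectedness through codimension one from the combinatorial fact that any two binary $[n]$-trees are linked by a sequence of nearest-neighbor interchanges \cite{waterman1978similarity}, with balancedness (multiplicity~$1$) from \cite{maclagan2015introduction}.

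The one place your argument is not airtight is connectedness through codimension one. Theorem~\ref{thm:tropicalStructure} only guarantees that \emph{some} fan structure on $\trop(\gr_{2,n})$ is balanced and connected through codimension one; to transfer this to the tree-topology fan you invoke ``uniqueness of the coarsest fan structure on a tropical variety,'' but that uniqueness fails in general (see the discussion in \cite{maclagan2015introduction}), so this step needs separate justification for $\gr_{2,n}$. The paper sidesteps the issue entirely by proving connectedness combinatorially via NNI moves, which is exactly the statement that adjacent maximal cones in the tree fan share a codimension-one face and that any two binary trees are joined by a chain of such moves. You essentially have this argument in hand already (your description of the three resolutions of a degree-four vertex \emph{is} the NNI move), so the cleanest fix is to drop the appeal to Theorem~\ref{thm:tropicalStructure} for connectedness and state the NNI fact directly.
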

\begin{proof}
    Let $L_n$ be the set of tree metrics whose topology is a star tree -
    that is, a tree with no internal edges.
    Since an edge-weighting of a leaf-labeled tree gives rise to a tree metric
    if and only if all \emph{internal} edge weights are nonnegative,
    $L_n$ is the lineality space of $\calt_n$.
    Theorem 3.4 in \cite{speyer2004tropical} implies that $\calt_n / L_n = \trop(\gr_{2,n}) / L_n$.
    But as $L_n$ is also the lineality space of $\gr_{2,n}$ (c.f. remarks following
    Corollary 3.1 in \cite{speyer2004tropical}),
    this implies that $\calt_n = \trop(\gr_{2,n})$.
    \\
    \indent
    The polyhedral structure is given in \cite{billera2001geometry}.
    Connectedness through codimension one follows from the fact
    that any two binary trees on the same leaf set can be reached from one another
    via a finite sequence of nearest-neighbor-interchanges (see \cite[Theorem 2]{waterman1978similarity}).
    We can see that this polyhedral fan is balanced by assigning multiplicity $1$
    to each maximal cone (see Remark 4.3.11 and Theorem 3.4.14 in \cite{maclagan2015introduction}).
\end{proof}
Let $T$ be a tree without vertices of degree two whose leaves are labeled by $[n]$.
Denote its edge set by $E$ and let $\rr^E$ denote the vector space of edge-weightings of $T$.
For each pair $ij \in \binom{[n]}{2}$,
define $\lambda_T(ij) \in \rr^E$ by $\lambda_T(ij)_e = 1$ whenever $e$ is on the unique path from
the leaf labeled $i$ to the leaf labeled $j$
and $0$ otherwise.
We denote the linear matroid underlying these $\lambda_T(ij)$ by $M(T)$
and call it the \emph{matroid associated to $T$}.
We will abuse language and say that a set $S \subseteq \binom{[n]}{2}$
is independent in $M(T)$ to mean that $\{\lambda_T(ij)\}_{ij \in S}$ is independent in $M(T)$.
Let $A_T$ be the matrix whose columns are the $\lambda_T(ij)$s.
An example is given in Figure \ref{fig:treeMatroidExample}.
Note that $\rowspan(A_T)$ is the linear span of the cone in $\calt_n$
containing all tree metrics with topology $T$.
Moreover, $M(T)$ is the matroid of this linear space.
These matroids were introduced and studied in \cite{dress2014matroid}.

\begin{figure}
    \begin{subfigure}{0.49\textwidth}\centering
        \includegraphics[scale=0.4]{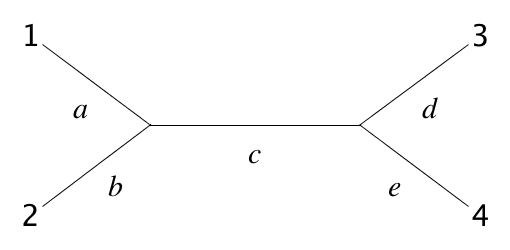}
    \end{subfigure}
    \begin{subfigure}{0.49\textwidth}\centering
        \[
            \bordermatrix{
                ~&12&13&14&23&24&34\cr
                a&1&1&1&0&0&0\cr
                b&1&0&0&1&1&0\cr
                c&0&1&1&1&1&0\cr
                d&0&1&0&1&0&1\cr
                e&0&0&1&0&1&1\cr
            }
        \]
    \end{subfigure}
    \caption{Let $T$ be the tree on the left with leaves \{1,2,3,4\} and edges labels $\{a,b,c,d,e\}$.
    The matrix on the right is $A_T$. Its columns are the $\lambda_T(ij)s$}
    \label{fig:treeMatroidExample}
\end{figure}

Let $T$ be a tree with leaf labels $[n]$.
A \emph{cherry} on a binary $[n]$-tree $T$ is a pair of leaves $ij$ such that
$i$ and $j$ are adjacent to the same vertex of degree $3$.
A caterpillar tree on $n \ge 4$ vertices is a binary tree with exactly two cherries.
See Figure \ref{fig:cherries} for an illustration.

\begin{figure}
    \begin{subfigure}{0.49\textwidth}
        \includegraphics[scale=0.4]{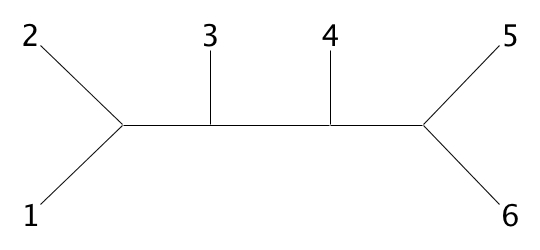}\hfill
    \end{subfigure}
    \begin{subfigure}{0.49\textwidth}
        \hfill\includegraphics[scale=0.4]{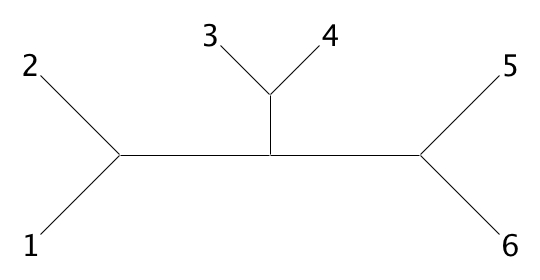}
    \end{subfigure}
    \caption{The tree on the left has cherries $12$ and $56$ hence it is a caterpillar.
    The tree on the right has cherries $12$, $34$ and $56$ and is therefore \emph{not}
    a caterpillar.}
    \label{fig:cherries}
\end{figure}

\begin{lemma}\label{lemma:caterpillar}
    A set $B \subseteq \binom{[n]}{2}$ is a basis of $M(\calt_n)$
    if and only if $B$ is a basis in the matroid
    associated to some caterpillar tree.
\end{lemma}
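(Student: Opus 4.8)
The plan is to prove both directions using the tropical-geometric machinery of Section~\ref{sec:tropLemmas}, together with the explicit polyhedral structure of $\calt_n$ given in Proposition~\ref{prop:treesTropStructure}. Recall that $\calt_n = \trop(\gr_{2,n})$, that the maximal cones of $\calt_n$ correspond to binary $[n]$-trees $T$, and that for such a cone the linear span is $\rowspan(A_T)$ with associated matroid $M(T)$. Thus, by Lemma~\ref{lem:tropicalAlgebraicMatroid}, $B$ is a basis of $M(\calt_n)$ if and only if $B$ is a basis of $M(T)$ for \emph{some} binary tree $T$. So what must be shown is: $B$ is a basis of $M(T)$ for some binary tree $T$ if and only if $B$ is a basis of $M(T')$ for some caterpillar tree $T'$. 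The ``if'' direction is immediate since caterpillar trees are binary, so the whole content is the ``only if'' direction.

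For the ``only if'' direction, suppose $B$ is a basis of $M(T)$ for some binary tree $T$ that is not a caterpillar. I would argue by induction on some measure of how far $T$ is from being a caterpillar --- for instance, the number of internal vertices not lying on the path between the two ``extremal'' cherries, or more robustly, a count of the branching structure of the internal tree. The key tool is Lemma~\ref{lem:basisTransfer}: given a binary tree $T$, contracting a single internal edge $e$ yields a codimension-one cone $\tau$ of $\calt_n$, and the three maximal cones containing $\tau$ correspond to the three binary resolutions $T = T_1, T_2, T_3$ of that contraction (the three ways to perform a nearest-neighbor interchange at $e$). Lemma~\ref{lem:basisTransfer} then tells us that if $B$ is a basis of $M(T_1)$, it is a basis of $M(T_i)$ for some $i \neq 1$. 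So from any binary tree $T$ carrying $B$ as a basis, we may perform an NNI move across any chosen internal edge and still have $B$ as a basis of one of the two \emph{other} resolutions.

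The main obstacle --- and the heart of the argument --- is a purely combinatorial claim about trees: starting from any binary $[n]$-tree $T$, one can reach a caterpillar tree by a sequence of NNI moves in which, at each step, we are free to choose any internal edge but \emph{not} which of the two non-identity resolutions we land in. In other words, I need to show that the set of caterpillar trees is reachable from any binary tree under this ``adversarial NNI'' process. I would prove this by picking a leaf $\ell$ and an internal edge $e$ incident (through one vertex) to a non-caterpillar branching, performing the NNI across $e$, and observing that \emph{both} non-identity resolutions strictly decrease my chosen complexity measure --- the point being to choose $e$ so that any NNI across it is progress, so the adversary's choice does not matter. Concretely, if $T$ is not a caterpillar it has at least three cherries; I would locate an internal edge both of whose endpoints lead toward cherries on the same side, so that resolving it either way reduces the number of cherries or pushes the tree toward linear shape. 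Once a caterpillar $T'$ is reached with $B$ still a basis of $M(T')$, we are done. A small amount of care is needed to confirm that $\dim \calt_n = \binom{n}{2} - ?$ matches the common cardinality of all these bases (this is automatic since all maximal cones of a balanced fan of pure dimension have the same dimension, namely $2n-3$ modulo the lineality space, so ``basis of $M(\calt_n)$'' is well-defined), and that Lemma~\ref{lem:basisTransfer} applies verbatim because $\calt_n$ is the tropicalization of the irreducible variety $\gr_{2,n}$. I expect the tree-combinatorics reachability step to require the most work; the transfer of the basis property along the way is handed to us by Lemma~\ref{lem:basisTransfer}.
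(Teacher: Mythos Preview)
Your proposal is correct and follows essentially the same route as the paper: reduce via Lemma~\ref{lem:tropicalAlgebraicMatroid} to bases of $M(T)$ for binary $T$, then use Lemma~\ref{lem:basisTransfer} to walk through the NNI graph under an ``adversarial'' choice of resolution until a caterpillar is reached. The paper makes your complexity measure precise by taking the number of cherries as the outer induction variable and, to guarantee that a cherry can eventually be eliminated regardless of which NNI branch the adversary picks, running an inner induction on the number of leaves in a particular subtree $U$ attached at the meeting point of three cherry paths---each inner step shrinks $U$ for either resolution, and the base case ($U$ a single cherry) drops the cherry count.
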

\begin{proof}
    Lemma \ref{lem:tropicalAlgebraicMatroid} implies that $S$ is independent in $M(\calt_n)$
    if and only if $S$ is independent in $M(T)$ for some binary tree $T$.
    We proceed by showing that if $B$ is a basis of $M(T)$ for a non-caterpillar binary tree $T$,
    then $B$ is also the basis of $M(T')$ for some binary tree $T'$ with one fewer cherry.
    It will follow by induction that $B$ is a basis of $M(C)$ for some caterpillar $C$.
    \\
    \indent
    So let $T$ be a binary tree with three distinct cherries $ij$, $i'j'$, $i''j''$
    with corresponding degree-$3$ vertices $p,p',p''$.
    There is a single vertex $q$ in the common intersection of the three paths
    $p$ to $p'$, $p'$ to $p''$, and $p$ to $p''$.
    If we delete $q$ from $T$ we get three trees $U,V,W$ which contain
    the cherries $ij$, $i'j'$, and $i''j''$ respectively.
    \\
    \indent
    We now induct on the number of leaves in $U$.
    The base case is where $U$ consists of the two leaves $i$ and $j$.
    We can visualize this as in Figure \ref{fig:T}
    letting $U'$ be $i$ and $U''$ be $j$.
    Lemma \ref{lem:basisTransfer} and Proposition \ref{prop:treesTropStructure}
    imply that $B$ must also be a basis of one of the trees $T_1$ or $T_2$,
    depicted in Figures \ref{fig:T1} and \ref{fig:T2} respectively.
    Note that $T_1$ and $T_2$ each have fewer cherries than $T$.
    \\
    \indent
    Now assume $U$ has more than two leaves.
    Let $s$ be the first node on the path from $q$ to $p$.
    Deleting $s$ from $U$ splits it into two subtrees
    which we denote $U'$ and $U''$.
    This is depicted in Figure \ref{fig:T}.
    Assume $ij$ belongs to the subtree $U'$.
    Using Lemma \ref{lem:basisTransfer} and Proposition \ref{prop:treesTropStructure}
    as before,{}
    we see that $B$ must also be a basis of one of the trees $T_3$ or $T_4$,
    depicted in Figures \ref{fig:T1} and \ref{fig:T2} respectively.

    \begin{figure}[h]
    \begin{subfigure}{0.33\textwidth}\centering
        \includegraphics[scale=0.25]{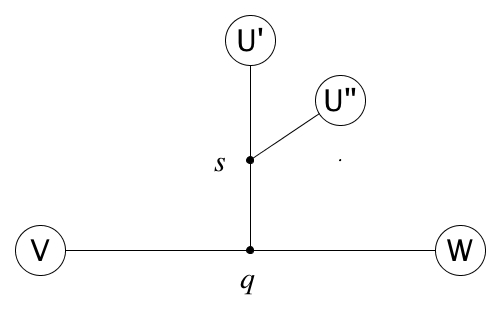}
        \caption{$T$}\label{fig:T}
    \end{subfigure}\hspace*{\fill}
    \begin{subfigure}{0.33\textwidth}\centering
        \includegraphics[scale=0.25]{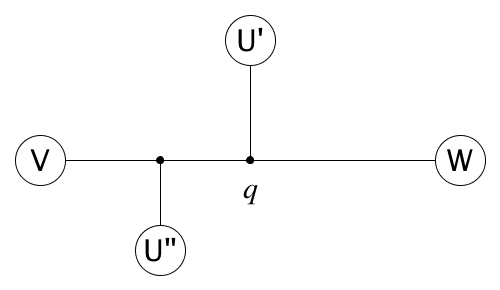}
        \caption{$T_3$}\label{fig:T1}
    \end{subfigure}\hspace*{\fill}
    \begin{subfigure}{0.33\textwidth}\centering
        \includegraphics[scale=0.25]{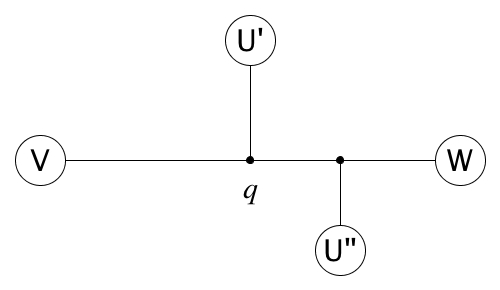}
        \caption{$T_4$}\label{fig:T2}
    \end{subfigure}
    \caption{Breaking $T$ into subtrees}\label{fig:breakIntoSubtrees}
    \end{figure}

    We can now repeat the process with $U'$ taking the place of $U$.
    In the case that $B$ is a basis of $T_3$ we expand $V$ to include $U''$,
    and in the case of $T_4$ we instead expand $W$ to contain $U''$.
    The result has strictly fewer leaves in $U$ and so we induct.
\end{proof}

Lemma \ref{lemma:caterpillar} reduces the problem of describing the independent sets of $M(\calt_n)$
to the simpler problem of describing the independent sets of $M(T)$ where $T$ is a caterpillar tree.
Luckily, such independent sets have a simple combinatorial description.
Given a subset $S \subseteq \binom{[n]}{2}$, we let $G(S)$ denote the graph with vertex set
$[n]$ and edge set $S$.
For each $n$, we let $\cat(n)$ denote the caterpillar 
with cherries $1,2$ and $n-1,n$ such that the other leaf labels
increase from $3$ to $n-2$ along the path from the $1,2$ cherry to the $n-1,n$ cherry.
This is depicted in Figure \ref{fig:caterpillar}.
Recall that a \emph{closed trail} in a graph is a sequence of vertices $v_0,\dots,v_k$
such that each $(v_i,v_{i+1})$ is an edge (indices taken mod $k+1$),
no edge is repeated, and $v_0 = v_k$.

\begin{figure}[h]
    \includegraphics[scale=0.4]{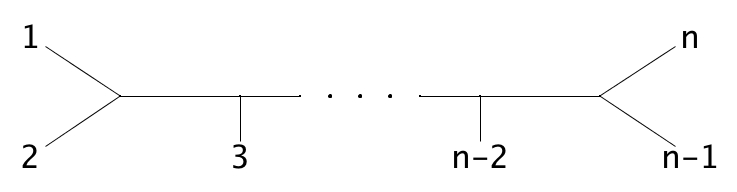}
    \caption{The caterpillar $\cat(n)$}\label{fig:caterpillar}
\end{figure}

\begin{prop}\label{prop:caterpilarMatroid}
    Let $S \subseteq \binom{[n]}{2}$.
    Then $S$ is independent in $M(\cat(n))$ if and only if $G(S)$ contains
    no closed trail with alternating vertices.
    That is, no closed trail of the form $v_0,v_1,\dots,v_{2k-1}$ where
    $v_{2i-1},v_{2i+1} < v_{2i}$ for each $i=0,\dots,k-1$ where indices are taken mod $2k$. 
\end{prop}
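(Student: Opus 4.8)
The plan is to work directly with the matrix $A_{\cat(n)}$ and translate linear (in)dependence of the columns $\lambda_{\cat(n)}(ij)$ into a statement about the graph $G(S)$. First I would fix a convenient basis for the row space, i.e. a convenient coordinate description of the vectors $\lambda_{\cat(n)}(ij)$. Label the edges of $\cat(n)$ so that $\ell_i$ is the pendant edge at leaf $i$ and $m_i$ (for $i = 3, \dots, n-2$) is the internal edge separating $\{1,2,\dots,i-1\}$ from the rest; then the path from leaf $i$ to leaf $j$ (say $i < j$) uses exactly the pendant edges $\ell_i, \ell_j$ together with the consecutive block of internal edges between them. Passing to the quotient by the lineality space (equivalently, deleting the internal-edge coordinates is \emph{not} quite right, but one can record each $\lambda(ij)$ by the pair of pendant-edge indicators plus the "interval" of internal edges), I would argue that a linear dependence $\sum_{ij \in S} c_{ij}\lambda_{\cat(n)}(ij) = 0$ forces, at each pendant edge $\ell_v$, that $\sum_{j} c_{vj} = 0$ where the sum is over edges of $S$ at $v$; and at each internal edge $m_i$, a telescoping condition on the coefficients of the edges "crossing" $m_i$.

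Next I would reinterpret these conditions on $S$ as a graph statement. Think of $S$ as the edge set of $G(S)$ and orient each edge $ij$ (with $i<j$) from the larger endpoint to the smaller, so every edge points "downhill" in the vertex order; this is the natural acyclic orientation lurking behind the \emph{alternating closed trail} language. The pendant-edge relations say the coefficient vector $c$ is a "flow-like" vector that is balanced at every vertex; the internal-edge relations further constrain how such a vector can cross each cut $\{1,\dots,i-1\}$ vs.\ the rest. I expect that a nonzero $c$ satisfying all these relations exists if and only if $G(S)$ supports a nonzero integer cycle whose edges, read around the cycle, alternate in this downhill orientation — precisely a closed trail $v_0, v_1, \dots, v_{2k-1}$ with $v_{2i\pm1} < v_{2i}$. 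The forward direction (a bad closed trail $\Rightarrow$ dependence) is the easy one: assign $\pm 1$ alternately to the edges of the trail and check that all vertex- and cut-relations cancel in pairs.

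The main obstacle will be the converse: showing that \emph{any} linear dependence among the $\lambda_{\cat(n)}(ij)$, $ij \in S$, can be witnessed by such an alternating closed trail. Here I would argue by minimality: take a dependence with inclusion-minimal support $S' \subseteq S$, so every edge of $S'$ has nonzero coefficient. Minimality plus the vertex-balance relations forces every vertex of $G(S')$ to have degree $\ge 2$, so $G(S')$ contains a cycle; the content is to show one can extract from $S'$ a closed trail that is \emph{alternating} in the downhill orientation, using the internal-edge (cut) relations to rule out "monotone" stretches — if a trail ever had two consecutive same-direction edges $a > b > c$, the cut relations at the internal edges between $b$ and the others would let us shorten or re-route, contradicting minimality. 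Making this extraction argument clean — i.e.\ showing the cut conditions exactly forbid non-alternating minimal supports — is the crux; everything else is bookkeeping with the explicit matrix $A_{\cat(n)}$. Finally I would note that "independent" is the negation of "contains such a dependence", which gives the stated biconditional, and remark that the downhill orientation is acyclic and that closed trails alternating with respect to it are exactly the alternating closed trails of the introduction, tying this back to Theorem~\ref{thm:treeMatroid}.
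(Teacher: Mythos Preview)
Your setup is sound and the easy direction (an alternating closed trail yields a dependence by putting $\pm 1$ on its edges) is correct. The gap is in the converse: the minimality-plus-rerouting sketch you describe is not an argument, and the specific move you propose (``if a trail has $a>b>c$, use the cut relations to shorten or re-route'') is not obviously valid. What is missing is one clean computation with your own relations. Write $D_v=\sum_{a<v}c_{av}$ and $U_v=\sum_{b>v}c_{vb}$; your pendant relation at $v$ is $D_v+U_v=0$, and if $f(i):=\sum_{a<i\le b}c_{ab}$ then $f(i)-f(i+1)=D_i-U_i$. The internal-edge relations force $f(i)=0$ for $i=3,\dots,n-1$, and the pendant relations at $1$ and $n$ give $f(2)=f(n)=0$ as well, so $D_v=U_v=0$ for every $v$. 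In other words, a dependence among the $\lambda_{\cat(n)}(ij)$ is exactly an element of the cycle space of the bipartite graph on two copies of $[n]$ in which $ij$ (with $i<j$) becomes the edge $(i,j)$. A minimal dependence is then a simple cycle in that bipartite graph, which reads off as an alternating closed trail in $G(S)$. Without this splitting you have no mechanism to force alternation.

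For comparison, the paper arrives at the same bipartite picture from the dual side: it identifies $\rowspan(A_{\cat(n)})$ via the four-point equations $x_{ik}+x_{jl}=x_{il}+x_{jk}$ for $i<j<k<l$, observes these are the defining equations of the rowspan of the vertex--edge incidence matrix of $K_{n,n}$, and concludes that $M(\cat(n))$ is the restriction of the graphic matroid of $K_{n,n}$ to the edges $(i,j)$ with $i<j$. That route avoids analysing dependences directly and immediately hands you the circuit description. Your column-side approach would reach the same endpoint once you insert the $D_v=U_v=0$ step above; as written, the hard direction is not established.
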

\begin{proof}
    Let $A_{\cat(n)}$ be the matrix whose columns are given by the $\lambda_{\cat(n)}(ij)$s.
    We claim that if $x \in \rr^{\binom{[n]}{2}}$ then $x \in \rowspan(A_{\cat(n)})$ if and only if
    $x_{ik} + x_{jl} - x_{il}-x_{jk} = 0$ for all $i<j<k<l$.
    To see this, note that $\rowspan(A_{\cat(n)})$ is equal to the linear hull of
    the collection of tree metrics with topology $\cat(n)$.
    If $x$ is a tree metric with topology $\cat(n)$, then each quartet $i<j<k<l$ must satisfy
    $x_{ij} + x_{kl} < x_{ik} + x_{jl} = x_{il} + x_{jk}$.
    Since the topology of a tree metric is determined by its quartets \cite{semple2003phylogenetics}
    the relations $x_{ik} + x_{jl} - x_{il} - x_{jk} = 0$ define the linear hull of
    the set of tree metrics with topology $\cat(n)$.
    \\
    \indent
    Now let $B$ be the vertex-edge incidence matrix of a complete bipartite graph on $n$ vertices. 
    That is, the columns of $B$ are indexed by the set $[n]\times [n]$,
    the rows are indexed by two disjoint copies of $[n]$,
    and the column corresponding to $(i,j)$
    has $1$s at the entries corresponding to $i$ in the first copy of $[n]$ and $j$ in the second copy,
    and $0$s elsewhere.
    Then $x \in \rowspan(B)$ if and only if $x_{(i,k)} + x_{(j,l)} - x_{(i,l)}-x_{(j,k)} = 0$
    for all $i,j,k,l$.
    \\
    \indent
    Notice that any functional vanishing on $\rowspan(A_{\cat(n)})$ can be associated to a functional vanishing on $\rowspan(B)$
    in a coordinate-wise way.
    In particular, for $i<j$ we associate each coordinate $ij$ in $\rowspan(A_{\cat(n)})$ with the coordinate $(i,j)$ in $\rowspan(B)$.
    This means that $M(\cat(n))$, which is the column matroid of $A_{\cat(n)}$, is the restriction of the column matroid of $B$
    to ground set $\{x_{(i,j)}\}$ where $i < j$.
    \\
    \indent
    The column matroid of $B$ is the polygon matroid on the complete bipartite graph
    on two disjoint copies of $[n]$.
    To realize $M(\cat(n))$ as a submatroid,
    we restrict to the collection of edges $(i,j)$ from $\{1,\dots,n-1\}$ to $\{2,\dots,n\}$ such that $i < j$.
    Closed trails supported on such edges in this bipartite graph correspond exactly to alternating closed trails
    in the complete graph on vertex set $[n]$.
\end{proof}

\begin{rmk}\label{rmk:treeCertificate}
    We now have a purely combinatorial proof that
    the problem of deciding whether a given $S \subseteq \binom{[n]}{2}$
    is independent in $M(\calt_n)$
    is in the complexity class NP.
    Namely, let $H_n$ be the bipartite graph on partite sets $A=\{1,\dots,n-1\}$
    and $B=\{2,\dots,n\}$ where $(i,j)$ is an edge from $A$ to $B$ if and only if $i < j$.
    By Proposition \ref{prop:caterpilarMatroid},
    $S \subseteq \binom{[n]}{2}$ is independent in $M(\cat_n)$
    if and only if the corresponding edges in $H_n$ form no cycles.
    Therefore, by Lemma \ref{lemma:caterpillar},
    a polynomial-time verifiable certificate
    that a given $S \subseteq \binom{[n]}{2}$ is independent in $M(\calt_n)$
    is a permutation $\sigma$ of $[n]$
    such that the edges $\{(\sigma(i),\sigma(j), (i,j) \in S\}$ of $H_n$
    induce no cycles.
\end{rmk}

Recall that an acyclic orientation of a graph is an assignment of
directions to each edge in a way such that produces no directed cycles.
An alternating closed trail in a directed graph is a closed trail $v_0,\dots,v_k$
such that each pair of adjacent edges $v_{i-1}v_i$ and $v_{i}v_{i+1}$
have opposite orientations.
We remind the reader of Proposition \ref{prop:projFullDimensional}
which implies that if $S \subseteq \binom{[n]}{2}$ is independent in $M(\calt_n)$,
then one may arbitrarily specify the distances among pairs in $S$
and still be able to extend the result to a tree metric.
We are now ready to prove Theorem \ref{thm:treeMatroid}
which we restate below using the language of this section.

\begin{thm}\label{thm:treeMatroid}
    Let $S \subseteq \binom{[n]}{2}$.
    Then $S$ is independent in $M(\calt_n)$ if and only if
    some acyclic orientation of $G(S)$ has no alternating closed trails.
\end{thm}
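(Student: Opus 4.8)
The plan is to combine Lemma~\ref{lemma:caterpillar} and Proposition~\ref{prop:caterpilarMatroid} with a translation between permutations of the leaf labels and acyclic orientations of $G(S)$. First I would upgrade Lemma~\ref{lemma:caterpillar} from bases to independent sets: if $S$ is independent in $M(\calt_n)$, extend it to a basis $B$, which by Lemma~\ref{lemma:caterpillar} is a basis of $M(C)$ for some caterpillar $C$, so $S \subseteq B$ is independent in $M(C)$; conversely, if $S$ is independent in $M(C)$ for a caterpillar $C$, then, since $C$ is binary, the set of tree metrics with topology $C$ is a maximal cone of $\calt_n$ whose linear span has matroid $M(C)$, so Lemma~\ref{lem:tropicalAlgebraicMatroid} gives that $S$ is independent in $M(\calt_n)$. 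Since every caterpillar tree on leaf set $[n]$ is obtained from $\cat(n)$ by relabelling leaves, this yields the reformulation: $S$ is independent in $M(\calt_n)$ if and only if there is a permutation $\sigma$ of $[n]$ such that $\sigma(S) := \{\,\sigma(i)\sigma(j) : ij \in S\,\}$ is independent in $M(\cat(n))$.

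Next I would set up the dictionary. Given a permutation $\sigma$ of $[n]$, let $D_\sigma$ be the orientation of $G(S)$ that directs each edge $ij \in S$ from the endpoint of smaller $\sigma$-value to the endpoint of larger $\sigma$-value; this is acyclic, since a directed cycle would give a strictly increasing cyclic chain of $\sigma$-values. I claim that $D_\sigma$ has an alternating closed trail if and only if $G(\sigma(S))$ has a closed trail with alternating vertices in the sense of Proposition~\ref{prop:caterpilarMatroid}. Indeed, in such a closed trail $v_0,\dots,v_{2k-1}$ with $v_{2i-1},v_{2i+1}<v_{2i}$ the even-indexed vertices are larger (in $\sigma$-value) than both trail-neighbours and the odd-indexed vertices are smaller than both; under $D_\sigma$ this means that, as one traverses the trail, the edges alternate between pointing forward and pointing backward, i.e. consecutive edges have opposite orientation — exactly an alternating closed trail in $D_\sigma$. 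This correspondence is reversible: an alternating closed trail in $D_\sigma$ has even length, and after a cyclic shift its vertices alternate between local $\sigma$-maxima and local $\sigma$-minima, reproducing the peak/valley pattern. Combining this with Proposition~\ref{prop:caterpilarMatroid}, $\sigma(S)$ is independent in $M(\cat(n))$ if and only if $D_\sigma$ has no alternating closed trail.

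It then remains to check that the orientations of the form $D_\sigma$ are, for the purposes of the theorem, all the acyclic orientations. Each $D_\sigma$ is acyclic. Conversely, given an arbitrary acyclic orientation $D$ of $G(S)$, a topological sort of $D$ is a linear order on $[n]$; letting $\sigma$ be the corresponding permutation (so $i$ precedes $j$ in the order iff $\sigma(i)<\sigma(j)$), every edge of $G(S)$ is directed in $D$ from its smaller-$\sigma$ endpoint to its larger-$\sigma$ endpoint, whence $D = D_\sigma$. Chaining the three pieces: $S$ is independent in $M(\calt_n)$ $\iff$ $\sigma(S)$ is independent in $M(\cat(n))$ for some $\sigma$ $\iff$ $D_\sigma$ has no alternating closed trail for some $\sigma$ $\iff$ some acyclic orientation of $G(S)$ has no alternating closed trail.

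I expect the main obstacle to be the alternation/parity bookkeeping in the dictionary between $D_\sigma$ and the alternating-vertex closed trails of $G(\sigma(S))$ — in particular, checking that ``adjacent edges have opposite orientation'' in the directed-graph sense translates precisely to the condition $v_{2i-1},v_{2i+1}<v_{2i}$, that an alternating closed trail necessarily has even length after a cyclic shift, and that no closed trail is created or destroyed under relabelling by $\sigma$ or under topological sorting. Everything else is routine once the reduction $S \text{ independent in } M(\calt_n) \iff \sigma(S) \text{ independent in } M(\cat(n)) \text{ for some } \sigma$ is in place.
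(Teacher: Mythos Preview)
Your proposal is correct and follows essentially the same approach as the paper: reduce to caterpillars via Lemma~\ref{lemma:caterpillar}, translate between acyclic orientations and permutations via a topological sort, and invoke Proposition~\ref{prop:caterpilarMatroid}. You are more explicit than the paper about upgrading Lemma~\ref{lemma:caterpillar} from bases to independent sets and about the parity/alternation dictionary between the two notions of alternating closed trail, both of which the paper leaves implicit.
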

\begin{proof}
    Assume that some acyclic orientation of $G(S)$ has no alternating closed trails.
    Fix such an acyclic orientation.
    Choose a permutation $\sigma: [n]\rightarrow [n]$ such that 
    edge $ij$ is oriented from $i$ to $j$ if and only if $\sigma(i) < \sigma(j)$.
    Then by Proposition \ref{prop:caterpilarMatroid},
    applying $\sigma$ to the vertices of $G(S)$ gives an independent set in $M(\cat(n))$.
    Since independence in $M(\calt_n)$ is invariant under permutation of the leaves,
    this implies that $S$ is independent in $M(\calt_n)$.
    \\
    \indent
    Now assume that every acyclic orientation of $G(S)$ has an alternating closed trail.
    Then there does not exist any permutation of $[n]$ that
    produces an independent set of $M(\cat(n))$ when applied to $S$.
    Therefore $S$ is not independent in $M(T)$ for any caterpillar $T$.
    Lemma \ref{lemma:caterpillar} then implies that $S$ is not independent in $M(\calt_n)$.
\end{proof}

\section{Rank two matrices}\label{sec:matrices}
We show how Theorem \ref{thm:treeMatroid} immediately
characterizes the algebraic matroid underlying the set of
$n\times n$ skew-symmetric matrices of rank at most $2$.
We then use this to characterize the algebraic matroid underlying the set
of general $m\times n$ matrices of rank at most $2$.

Denote by $\cals_r^n(\kk)$ the collection of $n\times n$
skew-symmetric $\kk$-matrices of rank at most $r$
and denote by $\calm_{r}^{m\times n}(\kk)$ the collection of $m\times n$ $\kk$-matrices of rank at most $r$.
Both are irreducible algebraic varieties.
We will only be concerned with these sets when $\kk$ is $\cc$ or $\puis$.
Lemma \ref{lem:yuTropAlgebraic} implies that limited to these choices,
the algebraic matroid does not depend on the ground field
and so we will suppress $\kk$ from notation.

\begin{prop}\label{prop:skewtrop}
    The tropicalization of the set of $n\times n$ skew symmetric $\puis$-matrices
    of rank at most $2$ is the set of all tree metrics on $n$ leaves.
    That is, $\trop(\cals_2^n(\cc)) = \calt_n$.
\end{prop}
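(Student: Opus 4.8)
The plan is to exhibit an explicit linear change of coordinates carrying $\cals_2^n$ onto the affine Grassmannian $\gr_{2,n}$, after which the statement follows immediately from Proposition \ref{prop:treesTropStructure}.

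First I would recall that a skew-symmetric matrix has even rank, so every matrix in $\cals_2^n$ is either the zero matrix or has rank exactly $2$; in either case the normal form for skew-symmetric bilinear forms lets us write it as $uv^T - vu^T$ for some vectors $u, v$. The $(i,j)$ entry of $uv^T - vu^T$ equals $u_iv_j - u_jv_i$, which for $i<j$ is exactly the Pl\"ucker coordinate $p_{ij}$ of the $2\times n$ matrix with rows $u$ and $v$. Identifying an $n\times n$ skew-symmetric matrix with the vector of its strictly upper-triangular entries gives a linear isomorphism from the space of such matrices onto $\kk^{\binom{[n]}{2}}$; under this isomorphism the computation above shows that $\cals_2^n$ is carried precisely onto the image of the map sending a $2\times n$ matrix to its vector of $2\times 2$ minors, that is, onto $\gr_{2,n}$. (One can instead verify this at the level of ideals: the ideal of $\cals_2^n$ is generated by the $4\times 4$ sub-Pfaffians $p_{ij}p_{kl} - p_{ik}p_{jl} + p_{il}p_{jk}$, which are exactly the Pl\"ucker quadrics defining $\gr_{2,n}$.)

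Because this isomorphism is, on the coordinates indexed by $\binom{[n]}{2}$, literally the identity map, and because the valuation is unchanged by the sign flips relating upper- and lower-triangular entries, tropicalization commutes with it, so $\trop(\cals_2^n(\cc)) = \trop(\gr_{2,n})$. Proposition \ref{prop:treesTropStructure} identifies $\trop(\gr_{2,n})$ with $\calt_n$, which completes the proof.

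I do not expect a substantive obstacle. The only point demanding care is the bookkeeping of the coordinate identification between skew-symmetric matrices and $\kk^{\binom{[n]}{2}}$, together with the verification that it matches the parametrization of $\gr_{2,n}$ on the nose --- including the cone point --- rather than merely up to scaling or Zariski closure.
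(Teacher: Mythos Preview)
Your proposal is correct and follows essentially the same route as the paper: both arguments identify the upper-triangular projection of $\cals_2^n$ with the affine cone $\gr_{2,n}$ and then invoke $\trop(\gr_{2,n}) = \calt_n$ (Proposition~\ref{prop:treesTropStructure} / \cite{speyer2004tropical}). The only difference is in how that identification is verified: you use the parametrization $A = uv^T - vu^T$ (equivalently, the fact that the ideal of $\cals_2^n$ is generated by the $4\times 4$ sub-Pfaffians, which coincide with the Pl\"ucker quadrics), whereas the paper argues one containment via the principal $4\times 4$ minors and then closes up by an irreducibility-plus-dimension count; your version is a bit more direct.
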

\begin{proof}
	It is shown in \cite{speyer2004tropical} that $\trop(\gr_{2,n}) = \calt_n$.
    Therefore the proposition follows from the claim that
    the projection of $\cals_2^n(\cc)$ onto the
    upper-triangular coordinates is $\gr_{2,n}$.
    This claim seems to be a well-known fact,
    but it is difficult to find a precise reference so we give a proof here.
    \\
    \indent
    {
    Let $\{x_{ij}: 1 \le i < j \le n\}$ be indeterminates and let $M$ be the $n\times n$ skew-symmetric matrix
    whose $ij$ entry is $x_{ij}$ whenever $i < j$.
    Let $I_{2,n} \subseteq \puis [x_{ij}: 1 \le i<j \le n]$ be the radical of the ideal generated by the $3\times 3$ minors of $M$.
    The variety of $I_{2,n}$ is the projection
    onto the upper-triangular coordinates of the set of all $n\times n$ skew-symmetric matrices of rank at most $2$.
    Moreover, $I_{2,n}$ contains all principal $4\times 4$ minors of $M$,
    and these polynomials are the squares of the canonical generating set of $\gr_{2,n}$.
    Therefore $\gr_{2,n}$ contains the variety of $I_{2,n}$.
    Since $\gr_{2,n}$ is irreducible, as it is parameterized by the $2\times 2$ determinants of a generic $2\times n$ matrix,
    it can be seen to be equal to the variety of $I_{2,n}$ by showing that the variety of $n\times n$ skew-symmetric
    matrices of rank at most $2$ has dimension at least that of $\gr_{2,n}$.
    Since the dimension of $\calt_n$ is $2n-3$ (see e.g. \cite{semple2003phylogenetics}),
    Theorem \ref{thm:tropicalStructure} implies that $\gr_{2,n}$ has dimension $2n-3$ as well.
    We can see that the set of skew-symmetric matrices of rank at most $2$ has at least this dimension 
    because in constructing such a matrix we may arbitrarily specify all but the first entry of the first row
    and all but the first and second entries of the second row.
    }
\end{proof}

\begin{thm}
\label{thm:skewMatroid}
    Let $S \subseteq \binom{[n]}{2}$.
    Then $S$ is independent in $M(\cals_2^n)$ if and only if
    some acyclic orientation of $G(S)$ has no alternating closed trails.
\end{thm}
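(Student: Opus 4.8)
The plan is to observe that this theorem follows immediately by combining Proposition \ref{prop:skewtrop} with Lemma \ref{lem:yuTropAlgebraic} and Theorem \ref{thm:treeMatroid}. First I would invoke Proposition \ref{prop:skewtrop}, which identifies $\trop(\cals_2^n(\cc))$ with the space of tree metrics $\calt_n$. Then Lemma \ref{lem:yuTropAlgebraic}, applied to the irreducible affine variety $\cals_2^n(\cc)$, says that the independence complex of $\cals_2^n(\cc)$ coincides with the independence complex of its tropicalization $\trop(\cals_2^n(\cc)) = \calt_n$. Hence $M(\cals_2^n)$ and $M(\calt_n)$ are literally the same matroid on ground set $\binom{[n]}{2}$, so $S$ is independent in $M(\cals_2^n)$ if and only if $S$ is independent in $M(\calt_n)$.

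Having made that identification, I would simply quote Theorem \ref{thm:treeMatroid}: $S$ is independent in $M(\calt_n)$ precisely when some acyclic orientation of $G(S)$ has no alternating closed trails. Chaining the two equivalences gives the statement.

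There is essentially no obstacle here; the content of the theorem has already been absorbed into the earlier results. The only point requiring any care is that Lemma \ref{lem:yuTropAlgebraic} requires $\cals_2^n(\cc)$ to be irreducible, which was already noted at the start of Section \ref{sec:matrices} (it is a determinantal variety parameterized, after projecting to the upper-triangular coordinates, by the $2\times 2$ minors of a generic $2\times n$ matrix, as established in the proof of Proposition \ref{prop:skewtrop}). With irreducibility in hand, the proof is a two-line deduction, so in the writeup I would keep it brief and just cite Proposition \ref{prop:skewtrop}, Lemma \ref{lem:yuTropAlgebraic}, and Theorem \ref{thm:treeMatroid} in sequence.
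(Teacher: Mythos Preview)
Your proposal is correct and matches the paper's own proof essentially verbatim: the paper's proof of Theorem~\ref{thm:skewMatroid} is the single sentence ``This follows from Lemma~\ref{lem:yuTropAlgebraic} and Proposition~\ref{prop:skewtrop} and Theorem~\ref{thm:treeMatroid}.'' Your additional remark about irreducibility is a reasonable sanity check but is already handled earlier in the paper, so the brief writeup you describe is exactly what is needed.
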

\begin{proof}
    This follows from Lemma \ref{lem:yuTropAlgebraic} and Proposition \ref{prop:skewtrop}
    and Theorem \ref{thm:treeMatroid}.
\end{proof}

\begin{rmk}\label{rmk:skewPolynomial}
    The same combinatorial polynomial-time verifiable certificate as in Remark \ref{rmk:treeCertificate}
    works to confirm that a given $S \subseteq \binom{[n]}{2}$ is
    independent in $M(\cals_2^n)$.
\end{rmk}

The dimension of $\cals_2^n$ is $2n-3$.
Therefore the rank of the matroid $M(\cals_2^n)$ is also $2n-3$.
Also note that $M(\cals_2^k)$ is a restriction of $M(\cals_2^n)$ whenever $k < n$.
It follows that if $S$ is a basis of $M(\cals_2^n)$,
then $G(S)$ has $2n-3$ edges and each subgraph of size $k$ has at most $2k-3$ edges.
It is a famous theorem of Laman \cite{laman1970graphs} that graphs satisfying these constraints
are exactly the minimal graphs which are generically infinitesimally 
rigid in the plane.
These graphs are often called ``Laman graphs.''
{ One might wonder whether all Laman graphs
are bases of $M(\cals_2^n)$
but this is not the case.}
A counterexample is $K_{3,3}$, the complete bipartite graph
on two partite sets of size three (see Figure \ref{fig:K33}).
This is a Laman graph but it is dependent in $M(\cals_2^n(\cc))$
because the coordinates specified by $K_{3,3}$
are the entries in a $3\times 3$ submatrix.
Such a submatrix must be singular in a matrix of rank $2$
and so those entries satisfy a polynomial relation.
Alternatively, one could appeal to Theorem \ref{thm:skewMatroid}
and check that every acyclic orientation of $K_{3,3}$ induces
an alternating cycle.

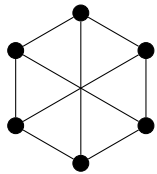
\begin{figure}[h]
    \begin{tikzpicture}
        \vertex (a) at (270:1)[]{};
        \vertex (b) at (330:1)[]{};
        \vertex (c) at (30:1)[]{};
        \vertex (d) at (90:1)[]{};
        \vertex (e) at (150:1)[]{};
        \vertex (f) at (210:1)[]{};
        \path
            (a) edge (b)
            (b) edge (c)
            (c) edge (d)
            (d) edge (e)
            (e) edge (f)
            (f) edge (a)
            (a) edge (d)
            (b) edge (e)
            (c) edge (f)
        ;
    \end{tikzpicture}
    \caption{$K_{3,3}$, a Laman graph that is not a basis of $M(\cals_2^n)$.}
    \label{fig:K33}
\end{figure}

{ As we will see in the proof of the following theorem,
$\calm_2^{m\times n}$ can be realized as a coordinate projection
of $\cals_2^k$ for any $k \ge m+n$}.
This implies that the matroid $M(\calm_2^{m\times n})$
can be realized as the restriction of such a $M(\cals_2^k)$.
We can use this fact to give a characterization of the algebraic matroid of $M(\calm_2^{m\times n})$,
whose bases are also the minimally $(2,2)$-rigid graphs in \cite{kalai2016bipartite}.
The ground set of $M(\calm_2^{m\times n})$ can be associated with the edges in the complete bipartite graph
with partite vertex sets $[m]$ and $[n]$, denoted $K_{m,n}$.

\begin{thm}
\label{thm:rectangularMatroid}
    Let $S$ be a collection of edges of $K_{m,n}$.
    Then $S$ is independent in $M(\calm_2^{m\times n})$ if and only if
    some acyclic orientation of $G(S)$ has no alternating closed trails.
\end{thm}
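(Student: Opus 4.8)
The plan is to realize $\calm_2^{m\times n}$ as a coordinate projection of $\cals_2^{m+n}$ and then reduce to Theorem \ref{thm:skewMatroid}. First I would recall the classical fact that a skew-symmetric matrix has rank at most $2$ exactly when it has the form $uv^{T} - vu^{T}$ for some column vectors $u,v$, so that $\cals_2^{m+n} = \{uv^{T} - vu^{T} : u,v \in \cc^{m+n}\}$. Write $u = (u_1,u_2)$ and $v = (v_1,v_2)$ with $u_1,v_1 \in \cc^{m}$ and $u_2,v_2 \in \cc^{n}$. Then the submatrix of $uv^{T} - vu^{T}$ with rows indexed by $[m]$ and columns indexed by $\{m+1,\dots,m+n\}$ equals $u_1 v_2^{T} - v_1 u_2^{T}$. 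Since every $M \in \calm_2^{m\times n}$ can be written $M = pq^{T} + rs^{T}$ as a sum of at most two rank-one matrices, the choice $u_1 = p$, $v_2 = q$, $v_1 = -r$, $u_2 = s$ exhibits $M$ as such a submatrix; conversely $u_1 v_2^{T} - v_1 u_2^{T}$ always has rank at most $2$. Hence the coordinate projection of $\cals_2^{m+n}$ onto the entries indexed by the edges of $K_{m,n}$, viewed inside $\binom{[m+n]}{2}$ as the pairs meeting both $[m]$ and $\{m+1,\dots,m+n\}$, is exactly $\calm_2^{m\times n}$.

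Next I would deduce that $M(\calm_2^{m\times n})$ is the restriction of $M(\cals_2^{m+n})$ to the ground set consisting of the edges of $K_{m,n}$. Indeed, for any set $S$ of edges of $K_{m,n}$ the projection of $\calm_2^{m\times n}$ onto $\cc^{S}$ coincides with the projection of $\cals_2^{m+n}$ onto $\cc^{S}$, because $\calm_2^{m\times n}$ is itself a coordinate projection of $\cals_2^{m+n}$; thus one projection is full dimensional if and only if the other is, i.e., $S$ is independent in $M(\calm_2^{m\times n})$ if and only if the corresponding set of edges of $K_{m+n}$ is independent in $M(\cals_2^{m+n})$. By Theorem \ref{thm:skewMatroid} the latter holds if and only if some acyclic orientation of $G(S)$ has no alternating closed trail. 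Since this last condition depends only on the abstract graph $G(S)$ and not on whether its vertices are regarded as lying in $[m]\sqcup[n]$ or in $[m+n]$, no further translation is needed and the theorem follows.

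The only step that requires genuine care is the first: verifying both inclusions in the identification of $\calm_2^{m\times n}$ with the off-diagonal block of $\cals_2^{m+n}$. One inclusion is immediate, since a submatrix of a rank-$\le 2$ matrix has rank $\le 2$; the reverse inclusion, including the degenerate cases $\rank M \le 1$ and $M = 0$, is handled uniformly by the decomposition $M = pq^{T} + rs^{T}$. Everything after that is formal manipulation of coordinate projections together with an appeal to Theorem \ref{thm:skewMatroid}.
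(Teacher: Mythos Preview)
Your proof is correct and follows essentially the same approach as the paper: both realize $\calm_2^{m\times n}$ as the upper-right $m\times n$ block of $\cals_2^{m+n}$ via a rank-one decomposition, deduce that $M(\calm_2^{m\times n})$ is the restriction of $M(\cals_2^{m+n})$ to the edges of $K_{m,n}$, and then invoke Theorem~\ref{thm:skewMatroid}. Your write-up is in fact a bit more careful than the paper's, since you verify both inclusions in the projection claim and spell out why the matroid restriction follows.
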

\begin{proof}
	{
	Define $k := m+n$.
	We claim that $\calm_2^{m\times n}$ is a coordinate projection of $\cals_2^k$.
	To see this, first let $A \in \calm_2^{m \times n}$.
	Then choose $u_1,u_2 \in \cc^{m\times 1}$ and $v_1,v_2 \in \cc^{1 \times n}$
	such that $A = u_1v_1 - u_2v_2$.
	Then define the following $k \times k$ matrix
	\[
		B :=
		\begin{pmatrix}
		u_1 \\
		v_2^T
		\end{pmatrix}
		\begin{pmatrix}
		u_2^T & v_1
		\end{pmatrix}
		-
		\begin{pmatrix}
		u_2 \\
		v_1^T
		\end{pmatrix}
		\begin{pmatrix}
		u_1^T & v_2
		\end{pmatrix}.
	\]
	Note that $B$ is skew-skew symmetric of rank at most $2$,
	and its upper-right block is equal to $A$.
    Thus the claim is proven
    and therefore $M(\calm_2^{m\times n})$ is a restriction of $M(\cals_2^k)$
    }
    The result then follows from Theorem \ref{thm:skewMatroid}.
\end{proof}

\begin{rmk}\label{rmk:rectangularCertificate}
    The content of Remarks
    \ref{rmk:treeCertificate} and \ref{rmk:skewPolynomial}
    can be adapted to give a combinatorial polynomial-time verifiable certificate
    that a given $S \subseteq [m]\times [n]$ is independent in $M(\calm_2^{m\times n})$.
    Namely, assuming $m > n$, we can translate $S$ into a subset $T$ of $\binom{[m+n]}{2}$
    by mapping each $(i,j) \in S$ to the pair $\{i+m,j\}$.
    Then the construction given in Remark \ref{rmk:treeCertificate}
    can be applied to $T$.
\end{rmk}

\bibliography{matrixCompletion}
\bibliographystyle{plain}{}

\end{document}